\newcommand{\pres}[3]{\textnormal{#1} \langle #2 \mid #3 \rangle}
\newcommand{\lra}[1]{\xleftrightarrow{\ast}_{#1}}
\newcommand{\xra}[1]{\xrightarrow{}^\ast_{#1}}
\newcommand{\xr}[1]{\xrightarrow{}_{#1}}
\newcommand{\trev}{\text{rev}}
\newcommand{\CF}{\mathcal{C}_{\operatorname{cf}}}
\newcommand{\DCF}{\mathcal{C}_{\operatorname{dcf}}}
\newcommand{\REG}{\mathcal{C}_{\operatorname{reg}}}
\newcommand{\IND}{\mathcal{C}_{\operatorname{ind}}}
\newcommand{\OC}{\mathcal{C}_{\operatorname{1C}}}
\newcommand{\cc}{\mathcal{C}}
\newcommand{\ct}{\mathcal{T}}
\newcommand{\cR}{\mathcal{R}}
\newcommand{\cW}{\mathcal{W}}
\newcommand{\cI}{\mathcal{I}}
\DeclareMathOperator{\AFL}{AFL}
\DeclareMathOperator{\IP}{IP}
\DeclareMathOperator{\WP}{WP}
\newtheorem{theorem}{Theorem}[section] 
\newtheorem*{theorems}{Theorem}
\newtheorem{lemma}[theorem]{Lemma}     
\newtheorem{corollary}[theorem]{Corollary}
\newtheorem*{corollarys}{Corollary}
\newtheorem{proposition}[theorem]{Proposition}
\theoremstyle{definition}
\newtheorem{definition}{Definition}
\newtheorem{question}{Question}
\newtheorem{example}{Example}
\newtheorem{remark}{Remark}
\numberwithin{equation}{section}
\title[The Word Problem for Free Products]
 {On The Word Problem for Free Products of Semigroups and Monoids} 
\subjclass[2020]{20M05 (primary) 20F10, 20M35, 68Q45 (secondary)}
\keywords{}
\author{Carl-Fredrik Nyberg-Brodda}
\address{Department of Mathematics, Alan Turing Building, University of Manchester, UK.}
\email{carl-fredrik.nybergbrodda@manchester.ac.uk}
\thanks{The author is currently a research associate, funded by the Dame Kathleen Ollerenshaw Trust, at the University of Manchester, UK}
\date{today}
\date{\today}
\begin{document}

\begin{abstract}
We study the language-theoretic aspects of the word problem, in the sense of Duncan \& Gilman, of free products of semigroups and monoids. First, we provide algebraic tools for studying classes of languages known as super-$\AFL$s, which generalise e.g. the context-free or the indexed languages. When $\cc$ is a super-$\AFL$ closed under reversal, we prove that the semigroup (monoid) free product of two semigroups (resp. monoids) with word problem in $\cc$ also has word problem in $\cc$. This recovers and generalises a recent result by Brough, Cain \& Pfeiffer that the class of context-free semigroups (monoids) is closed under taking free products. As a group-theoretic corollary, we deduce that the word problem of the (group) free product of two groups with word problem in $\cc$ is also in $\cc$. As a particular case, we find that the free product of two groups with indexed word problem has indexed word problem. 
\end{abstract}

\maketitle

\noindent An{\=\i}s{\=\i}mov \cite{Anisimov1971} introduced language-theoretic methods to group theory, by studying the language, over some finite generating set, of words equal to the identity element $1$ in a group $G$. The realisation that this language -- called the \textit{word problem} for $G$ -- can encode significant algebraic properties of $G$ led to the introduction of entirely new methods to group theory. Two of the first results proved in this new area, both proved by An{\=\i}s{\=\i}mov, are (1) a group $G$ has regular word problem if and only if  $G$ is finite; and (2) the free product $G = G_1 \ast G_2$ of two groups $G_1, G_2$ with context-free word problem also has context-free word problem. A decade later, Muller \& Schupp \cite{Muller1983, Dunwoody1985} proved a deep and now famous theorem, greatly extending An{\=\i}s{\=\i}mov's initial work on context-free groups. Specifically, they proved that a group has context-free word problem if and only if it is virtually free, i.e. has a finite index free subgroup. This demonstrated in a very clear fashion the deep connections between formal language theory and group theory.

It is only natural to wish to emulate such a success in the theory of monoids and semigroups. One is, however, immediately faced with a difficulty: the language of words representing the identity element in a monoid can often be entirely trivial, even in cases when the monoid is rather complicated.\footnote{For example, if $M = \pres{Mon}{a,b}{abaab =a}$, then clearly no non-empty word represents the identity element in $M$, but the decidability of the word problem for $M$ was for a short time an open problem, posed by Howie \&  Pride (see \cite[Example~7]{Howie1986}, later solved by Jackson \cite{Jackson1986}).} An alternative definition is hence needed. For a group $G$, two words $u, v$, written over the generators of $G$, are equal in $G$ if and only if $uv^{-1}$ is an element of the word problem of $G$. Note how we obtain $v^{-1}$ from the word $v$: we replace each generator $a$ by $a^{-1}$, and then reverse the word. The former operation is language-theoretically trivial; reversal, on the other hand, is not, and hence seems an important component of the word problem. Indeed, recognising this, Duncan \& Gilman \cite[Definition~5.1]{Duncan2004} defined, for a monoid $M$ generated by a finite set $A$, the language
\begin{equation}
\WP_A^M := \{ u \# v^\trev \mid u, v \in A^\ast, u =_M v\}, \tag{\ref{Eq:WP_A^M_def}}
\end{equation}
where $\#$ is some fixed symbol not in $A$, and $w^\trev$ denotes the \textit{reversal} of the word $w$, i.e. the word $w$ read backwards. The language \eqref{Eq:WP_A^M_def} is called the \textit{word problem} for $M$. This has many natural properties; for example, free monoids have context-free word problem. Indeed, the language is sufficiently natural to have been studied independently two decades prior to Duncan \& Gilman's article, see \cite[Corollary~3.8]{Book1982b}. The language \eqref{Eq:WP_A^M_def} has been studied by several authors \cite{Holt2008, Hoffmann2012, Brough2018, Brough2019, NybergBrodda2020c, NybergBrodda2020b, NybergBrodda2021d}. The approach via studying this language has also led to new insights in the group case; for example, a new proof of Herbst's result (see \cite{Herbst1991, Herbst1993}) that a group has one-counter word problem if and only if it is virtually free \cite{Holt2008}, which avoids the deep results involved in proving the Muller-Schupp theorem. 

In this article, we will study some further properties of the word problem for semigroups and monoids. In particular, we will focus on closure properties under taking (semigroup or monoid) free products, and for which classes of languages $\cc$ such closure can be obtained. We will investigate this question when $\cc$ is a \textit{super-$\AFL$}, as introduced by Greibach \cite{Greibach1970}. Such classes generalise the context-free languages and the indexed languages; roughly speaking, an $\AFL$ is a super-$\AFL$ if it is closed under ``recursion'' (see \S\ref{Sec:Toolbox} for details). Specifically, we show (Theorem~\ref{Theo:SGP_closed_under_FP} and Theorem~\ref{Thm:Monoid_FP_WP}) the following:

\begin{theorems}
Let $\cc$ be a super-$\AFL$ closed under reversal. Then the class of semigroups (monoids) with word problem in $\cc$ is closed under taking semigroup (monoid) free products.
\end{theorems}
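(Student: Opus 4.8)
The plan is to express the word problem of a free product by a ``context-free-like'' recursion whose productions come from the word problems of the factors, and then feed this into the closure machinery for super-$\AFL$s of \S\ref{Sec:Toolbox}. Since membership of $\WP_A^S$ in an $\AFL$ does not depend on the finite generating set $A$, it suffices to fix finitely generated semigroups $S_1,S_2$ over disjoint alphabets $A_1,A_2$ with $\WP_{A_i}^{S_i}\in\cc$, put $S=S_1\ast S_2$ and $A=A_1\sqcup A_2$, and prove $\WP_A^S\in\cc$. First I would record the normal form in $S$: every element is a unique alternating product over the two factors, each of which embeds in $S$; hence if $u\in A^{+}$ has maximal one-factor block decomposition $u=u_1\cdots u_m$, then $(\bar u_1,\dots,\bar u_m)$ is the normal form of $\bar u$. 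Consequently $u\#v^\trev\in\WP_A^S$ if and only if the block decompositions $u=u_1\cdots u_m$ and $v=v_1\cdots v_m$ have the same length, their $i$-th blocks lie in a common factor for every $i$, and $u_i=_{S_{j(i)}}v_i$.

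Next, introduce a fresh symbol $\$$ and set
\[
C \;=\; \{\#\}\ \cup\ \bigl\{\,u\,\$\,v^\trev \;:\; j\in\{1,2\},\ u,v\in A_j^{+},\ u=_{S_j}v\,\bigr\}.
\]
The $j$-th piece is obtained from $\WP_{A_j}^{S_j}$ by intersecting with the regular language $A_j^{+}\{\#\}A_j^{+}$ and then renaming $\#\mapsto\$$, so $C\in\cc$. Let $C^{(\infty)}$ be the least language $K$ with $K=C[\,\$\leftarrow K\,]$, i.e.\ the result of iteratively substituting ``the language obtained so far'' for $\$$ in $C$, starting from $\emptyset$. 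Solving such a one-variable language equation is exactly the recursion that a super-$\AFL$ is closed under, so $C^{(\infty)}\in\cc$ by \S\ref{Sec:Toolbox}. The claim is then that $\WP_A^S = C^{(\infty)}\cap\bigl(A^{+}\{\#\}A^{+}\bigr)$, and one further regular intersection gives $\WP_A^S\in\cc$.

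To verify the claim: for $C^{(\infty)}\subseteq\WP_A^S\cup\{\#\}$, every string of $C^{(\infty)}$ other than $\#$ has the shape $u_1\cdots u_m\#v_m^\trev\cdots v_1^\trev$ with each pair $(u_i,v_i)$ contributed by some $j_i$-piece of $C$, so $u_i=_{S_{j_i}}v_i$; since $=_S$ is a congruence this already forces $u_1\cdots u_m=_S v_1\cdots v_m$, so the string lies in $\WP_A^S$ --- and it is harmless if consecutive $j_i$ coincide, as the corresponding blocks then merely merge inside a single factor. For the reverse inclusion I would induct on the number $m$ of blocks of $u$ in a string $u\#v^\trev\in\WP_A^S$: for $m=1$ the string is one piece of $C$ with $\$$ replaced by $\#$; for $m\ge2$, peel off the outermost matched pair $u_1,v_1$ (they share a factor and are equal in it) and apply the inductive hypothesis to the inner string $u_2\cdots u_m\#v_m^\trev\cdots v_2^\trev\in\WP_A^S$, reinserting it by one substitution step into $u_1\,\$\,v_1^\trev\in C$. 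This finishes the semigroup case.

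The monoid case follows the same blueprint, but contains the one genuine complication I anticipate. In $M=M_1\ast M_2$ a block can represent the identity of its factor and then vanish under reduction, so equal words need not have matching block decompositions --- for example, a single block of $u$ over $M_1$ may equal in $M_1$ a product $\bar v_1\bar v_3$ of two blocks of $v$ that are separated in $v$ by an identity-valued block over $M_2$. Thus the seed $C$ above under-generates, and a single matched pair over one factor must be permitted to straddle identity-valued subwords over the other. To handle this I would enlarge $C$ with productions built from the identity languages $\Delta_j=\{w\in A_j^{*}:w=_{M_j}1\}$ --- which lie in $\cc$, being obtained from $\WP_{A_j}^{M_j}$ by a regular intersection and a homomorphism erasing $\#$ --- and from their reversals $\Delta_j^\trev$; ensuring $\Delta_j^\trev\in\cc$, which is forced by the mirroring of the right-hand half of the word problem, is where the hypothesis that $\cc$ is closed under reversal enters. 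Over-generation remains harmless for the same congruence reason as before, so the crux is to shape these identity-insertion gadgets so that the enlarged recursion generates \emph{precisely} $\WP_A^M$; carrying out this combinatorial bookkeeping is the main obstacle.
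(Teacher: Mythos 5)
Your semigroup argument is correct and is essentially the paper's own route in different clothing: the one-variable recursion with seed $C$ (productions $\$\to u\,\$\,v^\trev$ for $u=_{S_j}v$ and $\$\to\#$) is exactly the grammar formulation of the construction behind Proposition~\ref{Prop:L1,L2=>L1starL2_in_C} and Lemma~\ref{Lem:WP_sfp_is_AP}, namely taking ancestors of $\#$ under the monadic $\cc$-system $\{(u\#v^\trev,\#)\}$; your soundness-by-congruence and completeness-by-induction on the number of blocks are the same arguments, modulo the routine bridging needed to invoke closure under nested iterated substitution (make the substitution nested by putting $\$\in\sigma(\$)$ and discard $\$$-containing words, which your final regular intersection already does).

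The genuine gap is the monoid half, which the theorem also asserts. You diagnose the difficulty correctly (identity-valued blocks, and the need for reversal closure on the right of $\#$), but you stop at ``shape the identity-insertion gadgets so the recursion generates precisely $\WP_A^M$'', and as sketched those gadgets under-generate. Productions that insert a word of $\Delta_1\cup\Delta_2$ (or a reversed one) only at the frontier of the unfolding recursion cannot reach, for instance, $ab^2a\,\#\,\varepsilon\in\WP_A^M$ for $M_1=\pres{Mon}{a}{a^2=1}$ and $M_2=\pres{Mon}{b}{b^2=1}$: the word $ab^2a$ is identity-valued in $M$ but lies in neither $\Delta_1$ nor $\Delta_2$, and building it requires inserting $b^2$ strictly inside an already inserted $a^2$, away from the centre marker. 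What is needed is unrestricted, iterated, position-independent insertion of words of $\IP_{A_1}^{M_1}\cup\IP_{A_2}^{M_2}$ anywhere to the left of $\#$, and of their reversals anywhere to the right, the two sides being treated independently; this is precisely the paper's bipartisan-ancestor step (Lemma~\ref{Lem:WP_Monoids_is_bipartisan} together with Proposition~\ref{Prop:LinC=>L(R1,R2)inC,monadic_prop}), whose completeness direction reduces both $u$ and $v$ to reduced forms and only then invokes Lemma~\ref{Lem:Monoid_free_product_normal_form}. Without an analogue of this step --- or a substantially more elaborate grammar that in effect regenerates the entire identity language of the free product and distributes it arbitrarily on each side --- the monoid case of the theorem remains unproven in your proposal.
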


When $\cc = \CF$, the class of context-free languages, this recovers a recent result by Brough, Cain \& Pfeiffer \cite{Brough2019}, who proved their result directly using pushdown automata. The proof in this article of their result is, on the other hand, independent on any particular specification of the context-free languages, and only depends on certain closure properties satisfied by this class. We find another example of a group-theoretic result which falls out of monoid-theoretic considerations. Indeed, as the monoid free product coincides with the group free product for the class of groups, we find (Corollary~\ref{Cor:WP_groups_closed_FP}) the following:

\begin{corollarys}
Let $\cc$ be a super-$\AFL$ closed under reversal. Then the class of groups with word problem in $\cc$ is closed under free products.
\end{corollarys}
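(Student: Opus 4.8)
The plan is to deduce this from the monoid case (Theorem~\ref{Thm:Monoid_FP_WP}), once we observe that the group free product of two groups, regarded as a monoid, coincides with their monoid free product. So let $G_1$ and $G_2$ be groups with word problem in $\cc$, say with respect to finite group generating sets $X_1$ and $X_2$. First I would note that each $G_i$ is a monoid, finitely generated as a monoid by $X_i \cup X_i^{-1}$, and that --- since $\cc$ is a super-$\AFL$, hence in particular an $\AFL$ --- membership of $\WP$ in $\cc$ does not depend on the choice of finite (monoid) generating set; hence $G_1$ and $G_2$ have word problem in $\cc$ when viewed as monoids. Applying Theorem~\ref{Thm:Monoid_FP_WP} to the monoids $G_1, G_2$ then yields that their monoid free product $M := G_1 \ast G_2$ has word problem in $\cc$.

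It remains only to identify $M$ with the group free product $G_1 \ast_{\mathrm{grp}} G_2$. For this I would argue as follows. The monoid $M$ is generated by the images of $X_1 \cup X_1^{-1}$ and $X_2 \cup X_2^{-1}$, and each of these generators is invertible in $M$; hence every element of $M$, being a product of invertible elements, is itself invertible, so $M$ is a group. Now the monoid free product is the coproduct in the category of monoids, while $G_1 \ast_{\mathrm{grp}} G_2$ is the coproduct in the category of groups; since every monoid homomorphism between groups is automatically a group homomorphism, the two universal properties supply mutually inverse homomorphisms between $M$ and $G_1 \ast_{\mathrm{grp}} G_2$, so these monoids are isomorphic. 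Consequently $G_1 \ast_{\mathrm{grp}} G_2 \cong M$ has word problem in $\cc$, as required.

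The substance of the argument lies entirely in Theorem~\ref{Thm:Monoid_FP_WP}; I do not expect any genuine obstacle here beyond making precise the two routine points used above, namely the independence of the word problem from the choice of finite generating set (which is already implicit in the formulation of the monoid results) and the identification of the monoid and group free products of a pair of groups via the universal-property computation just sketched.
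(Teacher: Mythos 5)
Your proposal is correct and follows exactly the route the paper takes: the corollary is deduced from Theorem~\ref{Thm:Monoid_FP_WP} together with the fact (stated in \S\ref{Subsec:Free_products}, with a reference to Howie) that the monoid free product of two groups coincides with their group free product. Your additional details --- independence of the generating set via closure under inverse homomorphism, and the universal-property verification that the two free products agree --- are correct elaborations of steps the paper treats as standard.
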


As a particular corollary, we find (Corollary~\ref{Cor:Indexed_groups_WP}) that the class of groups with indexed word problem is closed under free products. There does not appear to be a direct group-theoretic (or otherwise) proof of this result in the literature, unlike Herbst's result mentioned above, which was first proved by group-theoretic techniques and then reproved via monoid theory. 

An overview of the article is as follows. In \S\ref{Sec:Background}, we provide some background and notation used throughout the article. In \S\ref{Sec:Toolbox} we give three tools for studying the language theory associated to the word problem for a free product. Finally, in \S\ref{Sec:FPandWP}, we use these tools to prove the main results of this article. 
 
\clearpage
\section{Background}\label{Sec:Background}

\noindent We assume the reader is familiar with the fundamentals of formal language theory. In particular, an $\AFL$ (\textit{abstract family of languages}) is a class of languages closed under homomorphism, inverse homomorphism, intersection with regular languages, union, concatenation, and the Kleene star. For some background on this, and other topics in formal language theory, we refer the reader to standard books on the subject \cite{Harrison1978, Hopcroft1979, Berstel1985}. The paper also assumes familiarity with the basics of the theory of semigroup, monoid, and group presentations, which will be written as $\pres{Sgp}{A}{\cR}$, $\pres{Mon}{A}{\cR}$, and $\pres{Gp}{A}{\cR}$, respectively. For further background see e.g. \cite{Adian1966,Magnus1966,Neumann1967, Lyndon1977,Campbell1995}.

\subsection{Basic notation}

Let $A$ be a finite alphabet, and let $A^\ast$ denote the free monoid on $A$, with identity element denoted $\varepsilon$ or $1$, depending on the context. Let $A^+$ denote the free semigroup on $A$, i.e. $A^+ = A^\ast - \{ \varepsilon\}$. For $u, v \in A^\ast$, by $u \equiv v$ we mean that $u$ and $v$ are the same word. For $w \in A^\ast$, we let $|w|$ denote the \textit{length} of $w$, i.e. the number of letters in $w$. We have $|\varepsilon| = 0$. If $w \equiv a_1 a_2 \cdots a_n$ for $a_i \in A$, then we let $w^\trev$ denote the \textit{reverse} of $w$, i.e. the word $a_n a_{n-1} \cdots a_1$. Note that $^\trev \colon A^\ast \to A^\ast$ is an anti-homomorphism, i.e. $(uv)^\trev \equiv v^\trev u^\trev$ for all $u, v \in A^\ast$. If $X \subseteq A^\ast$, then we let $X^\trev = \{ x^\trev \mid x \in X \}$. If the words $u, v \in A^\ast$ are equal in the monoid $M = \pres{Mon}{A}{\cR}$, then we denote this $u =_M v$. By $M^\trev$ we mean the \textit{reversed} monoid
\[
M^\trev = \pres{Mon}{A}{ \{ u^\trev = v^\trev \mid (u,v) \in \cR \}}.
\]
Of course, the word problem for $M$ is decidable if and only if it is decidable for $M^\trev$, by virtue of the fact that $u =_M v$ if and only if $u^\trev =_{M^\trev} v^\trev$. Finally, when we say that a monoid $M$ is generated by a finite set $A$, we mean that there exists a surjective homomorphism $\pi \colon A^\ast \to M$.

We give some notation for rewriting systems. For an in-depth treatment and further explanations of the terminology, see e.g. \cite{Book1982, Jantzen1988, Book1993}. A \textit{rewriting system} $\cR$ on $A$ is a subset of $A^\ast \times A^\ast$. An element of $\cR$ is called a \textit{rule}. The system $\cR$ induces several relations on $A^\ast$. We will write $u \xr{\cR} v$ if there exist $x, y \in A^\ast$ and a rule $(\ell, r) \in \cR$ such that $u \equiv x\ell y$ and $v \equiv xry$. We let $\xra{\cR}$ denote the reflexive and transitive closure of $\xr{\cR}$. We denote by $\lra{\cR}$ the symmetric, reflexive, and transitive closure of $\xr{\cR}$. The relation $\lra{\cR}$ defines the least congruence on $A^\ast$ containing $\cR$. For $X \subseteq A^\ast$, we let $\langle X \rangle_\cR$ denote the set of \textit{ancestors} of $X$, i.e. $\langle X \rangle_\cR = \{ w \in A^\ast \mid \exists x \in X \textnormal{ such that } w \xra{\cR} x \}$. The monoid $\pres{Mon}{A}{\cR}$ is identified with the quotient $A^\ast / \lra{\cR}$. For a rewriting system $\ct \subseteq A^\ast \times A^\ast$ and a monoid $M = \pres{Mon}{A}{\cR}$, we say that $\ct$ is $M$\textit{-equivariant} if for every rule $(u, v) \in \ct$, we have $u =_M v$. That is, $\ct$ is $M$-equivariant if and only if $\lra{\ct} \subseteq \lra{\cR}$.

 Let $u, v \in A^\ast$ and let $n \geq 0$. If there exist words $u_0, u_1, \dots, u_n \in A^\ast$ such that 
\[
u \equiv u_0 \xr{\cR} u_1 \xr{\cR} \cdots \xr{\cR} u_{n-1} \xr{\cR} u_n \equiv v,
\]
then we denote this $u \xr{\cR}^n v$, i.e. $u$ rewrites to $v$ in $n$ steps. Thus $\xra{\cR} = \bigcup_{n \geq 0} \xr{\cR}^n$.

A rewriting system $\cR \subseteq A^\ast \times A^\ast$ is said to be \textit{monadic} if $(u, v) \in \cR$ implies $|u| \geq |v|$ and $v \in A \cup \{ \varepsilon \}$. We say that $\cR$ is \textit{special} if $(u, v) \in \cR$ implies $v \equiv \varepsilon$. Every special system is monadic. Let $\cc$ be a class of languages. A monadic rewriting system $\cR$ is said to be $\cc$ if for every $a \in A \cup \{ \varepsilon \}$, the language $\{ u \mid (u, a) \in \cR \}$ is in $\cc$. Thus, we may speak of e.g. monadic $\cc$-rewriting systems or monadic context-free rewriting systems.

\subsection{Word problems}

Let $G$ be a group with finite (group) generating set $A$, with $A^{-1}$ the set of inverses of the generators $A$. The language 
\begin{equation}\label{Eq:IP_group}
\IP_{A \cup A^{-1}}^G := \{ w \mid w  \in (A \cup A^{-1})^\ast, w =_G 1 \}
\end{equation}
is called the (group-theoretic) \textit{word problem} for $G$, see e.g. \cite{Anisimov1971,Muller1983}. Let $M$ be a monoid with a finite generating set $A$. Translating the above definition of the word problem directly to $M$ does not, in general, yield much insight into the structure of $M$. That is, the language 
\begin{equation}\label{Eq:IP_Defn}
\IP_A^M = \{ w \in A^\ast \mid w =_M 1 \},
\end{equation}
does not encode much of the structure of $M$ (with some notable exceptions). Duncan \& Gilman \cite[p.\ 522]{Duncan2004}, realising this, introduced a different language. The (monoid) \textit{word problem of $M$ with respect to $A$} is defined as the language
\begin{equation}\label{Eq:WP_A^M_def}
\WP_A^M := \{ u \# v^\trev \mid u, v \in A^\ast, u =_M v\},
\end{equation}
where $\#$ is some fixed symbol not in $A$. For a class of languages $\cc$, we say that $M$ has $\cc$-word problem if $\WP_A^M$ is in $\cc$. If $\cc$ is closed under inverse homomorphism, then $M$ having $\cc$-word problem does not depend on the finite generating set $A$ chosen for $M$ (see e.g. \cite{Hoffmann2012}). In particular, if $\cc$ is an $\AFL$, we will speak of monoids or semigroups having word problem in $\cc$ without any reference to any particular finite generating set. Furthermore, if $M$ is a group, then $M$ has group-theoretic word problem in $\cc$ if and only if $\WP_A^M$ is in $\cc$ \cite[Theorem~3]{Duncan2004}. In particular, if $\cc$ is a class of languages closed under inverse homomorphism, and $M$ is a group generated by a finite set $A$ (as a monoid), then one of \eqref{Eq:IP_group}, \eqref{Eq:IP_Defn}, and \eqref{Eq:WP_A^M_def} lies in $\cc$ if and only if they all do.

Of course, for a semigroup $S$ generated by a finite set $A$, by which we mean there exists a surjective homomorphism $\pi \colon A^+ \to S$, one may also define the semigroup-theoretic word problem 
\begin{equation}\label{Eq:SGPWP}
\operatorname{SWP}_A^S := \{ u \# v^\trev \mid u, v \in A^+ \textnormal{ and } u =_S v\}.
\end{equation}
We will, however, for ease of notation and as context will always make it clear, be somewhat abusive, and simply use the notation $\WP_A^S$ for the language \eqref{Eq:SGPWP}.

\subsection{Free products}\label{Subsec:Free_products}

Let $X \colon \mathbb{N} \to \{ 1, 2 \}$ be a parametrisation such that either $X(2j) = 1$ and $X(2j+1) = 2$, or else $X(2j) = 2$ and $X(2j+1) = 1$, for all $ j \in \mathbb{N}$. A parametrisation $X$ of this form will be called \textit{standard}, and will be used throughout this article.

Let $S_1 = \pres{Sgp}{A_1}{R_1}$ and $S_2 = \pres{Sgp}{A_2}{R_2}$ be two semigroups, where $A_1 \cap A_2 = \varnothing$. Then the \textit{semigroup free product} $S_1 \ast S_2$ of $S_1$ and $S_2$ is defined as the semigroup with presentation $\pres{Sgp}{A_1 \cup A_2}{R_1 \cup R_2}$. Semigroup free products may appear unnatural to a group theorist; for example, the free product of two trivial semigroups (where the trivial semigroup is the semigroup $\pres{Sgp}{e}{e^2=e}$ with one element) is infinite, and turns out to be a semigroup with rather interesting properties, appearing in various places, see e.g. \cite{Benzaken1975, Campbell1996b, Cain2009c, Zelenyuk2009, Shneerson2017}. Furthermore, even if $S_1, S_2$ are both monoids, i.e. have identity elements, then their respective identity elements need not be an identity element of their free product. In particular, the semigroup free product of groups need not be a group. On the other hand, a normal form lemma for semigroup free products is very simple to state. 

\begin{lemma}[{See e.g. Howie \cite[\S8.2]{Howie1995}}]\label{Lem:Semigroup_free_product_normalforms}
Let $S_1, S_2$ be two semigroups, generated by disjoint sets $A_1, A_2$, respectively. Let $S = S_1 \ast S_2$ denote their semigroup free product. Then for $u, v \in (A_1 \cup A_2)^+$ we have $u =_S v$ if and only if there exist unique $u_1, v_1, u_2, v_2, \dots, u_n, v_n$ such that
\begin{enumerate}[label=(\arabic*)]
\item $u \equiv u_1 u_2 \cdots u_n$ and $v \equiv v_1 v_2 \cdots v_n$;
\item $u_i, v_i \in A_{X(i)}^+$ and $u_i =_{S_{X(i)}} v_i$ for all $1 \leq i \leq n$,
\end{enumerate}
where $X$ is a standard parametrisation. 
\end{lemma}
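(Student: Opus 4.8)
The easy direction is immediate: if $u \equiv u_1 \cdots u_n$ and $v \equiv v_1 \cdots v_n$ with $u_i =_{S_{X(i)}} v_i$ for all $i$, then, since $R_{X(i)} \subseteq R_1 \cup R_2$, each equality $u_i =_S v_i$ holds in $S$, and hence $u =_S v$ by concatenation. The uniqueness assertion is purely combinatorial: any factorisation $w \equiv w_1 \cdots w_n$ with $w_i \in A_{X(i)}^+$ and $X$ standard is forced to be the factorisation of $w$ into maximal blocks of letters from a single alphabet, which is determined by $w$ alone. So the real content is the ``only if'' direction, namely that $u =_S v$ forces the block decompositions of $u$ and $v$ to agree in length, in parametrisation, and factor-by-factor.

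To prove this I would use a van der Waerden style left action on reduced sequences. Let $N$ be the set of all nonempty finite sequences $(s_1, \dots, s_n)$ with $s_i \in S_{X(i)}$ for some standard $X$ (equivalently, consecutive entries lie in different factors), and put $N^1 = N \cup \{\emptyset\}$. For a generator $a \in A_j$ with image $\bar a \in S_j$, define $\lambda_a \colon N^1 \to N^1$ by $\lambda_a(\emptyset) = (\bar a)$, by $\lambda_a(s_1,\dots,s_n) = (\bar a s_1, s_2, \dots, s_n)$ when $s_1 \in S_j$, and by $\lambda_a(s_1,\dots,s_n) = (\bar a, s_1, \dots, s_n)$ when $s_1 \in S_{3-j}$; since $\bar a s_1$ is again a genuine element of $S_j$, no ``cancellation'' occurs (this is exactly where semigroup free products are simpler than group ones) and the output always lies in $N^1$ with first entry in $S_j$. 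Extend $a \mapsto \lambda_a$ to a homomorphism $\lambda \colon (A_1 \cup A_2)^+ \to \mathcal{T}_{N^1}$ into the monoid of self-maps of $N^1$ (letters acting on the left, words read right to left); the extension exists and is unique because $(A_1 \cup A_2)^+$ is free.

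The crux is that $\lambda$ respects $R_1 \cup R_2$, so that it induces $\bar\lambda \colon S \to \mathcal{T}_{N^1}$. For this one shows, by induction on $|p|$, that for any $p \in A_j^+$ the transformation $\lambda_p$ depends only on the image $\bar p \in S_j$: explicitly $\lambda_p(\emptyset) = (\bar p)$, $\lambda_p(s_1,\dots,s_n) = (\bar p s_1, s_2, \dots, s_n)$ if $s_1 \in S_j$, and $\lambda_p(s_1,\dots,s_n) = (\bar p, s_1, \dots, s_n)$ if $s_1 \in S_{3-j}$. Granting this, any $(\ell, r) \in R_j$ satisfies $\bar\ell = \bar r$ in $S_j$, whence $\lambda_\ell = \lambda_r$. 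Then a second short induction gives, for $w \in (A_1 \cup A_2)^+$ with maximal-block decomposition $w \equiv w_1 \cdots w_k$, the identity $\lambda_w(\emptyset) = (\overline{w_1}, \dots, \overline{w_k})$ in $N$. Hence if $u =_S v$, evaluating $\bar\lambda$ of the common element at $\emptyset$ yields $(\overline{u_1}, \dots, \overline{u_n}) = (\overline{v_1}, \dots, \overline{v_m})$ in $N$; since two reduced sequences coincide precisely when they have the same length and the same entries (and each entry determines which factor it lies in), we conclude $n = m$, the parametrisations agree, and $u_i =_{S_{X(i)}} v_i$ for all $i$, which is (1) and (2).

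I expect the only genuine obstacle to be setting things up so that the relation-check is clean, and this hinges entirely on the observation that $\lambda_p$ sees only the image of $p$ in the relevant factor. An alternative route is to construct $S_1 \ast S_2$ directly as $N$ equipped with the ``concatenate, then merge the two boundary syllables if they lie in the same factor'' product; there the analogous obstacle is checking associativity of that product, which is routine but case-heavy, and which routing everything through $\mathcal{T}_{N^1}$ avoids, since composition of maps is associative for free.
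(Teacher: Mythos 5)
Your proof is correct, but note that the paper does not prove this lemma at all: it is quoted as a standard fact with a pointer to Howie, so there is no in-paper argument to match. Your van der Waerden-style argument is sound and complete in outline: the easy direction and the uniqueness of the alternating (maximal-block) factorisation are exactly as routine as you say, and the action $\lambda$ on the set $N^1$ of reduced sequences does induce a map on $S$, because $\lambda$ is a homomorphism on the free semigroup and your first induction shows $\lambda_p$ depends only on $\overline{p}\in S_j$ for $p\in A_j^+$, so defining relations (and hence, via the homomorphism property, all one-step rewritings in context) are respected; evaluating at $\emptyset$ then recovers the syllable sequence, which pins down $n$, the parametrisation, and the factorwise equalities. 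The only cosmetic caveat is that one should formally work with the disjoint union $S_1\sqcup S_2$ (tagging entries by their factor) so that the case split on where $s_1$ lies, and the final step ``each entry determines which factor it lies in,'' are literally well-defined. Compared with the route in the cited source, which in effect constructs $S_1\ast S_2$ concretely as the set of alternating sequences under the concatenate-and-merge product and verifies that this gives the free product, your approach buys exactly what you claim: the case-heavy associativity verification is replaced by the cleaner check that $\lambda_\ell=\lambda_r$ for defining relations, with associativity inherited from composition of maps; the price is the slightly more indirect bookkeeping of the action, and either route is a perfectly acceptable proof of the normal form lemma.
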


Let $M_1 = \pres{Mon}{A_1}{R_1}$ and $M_2 = \pres{Mon}{A_2}{R_2}$ be two monoids, with $A_1 \cap A_2 = \varnothing$. The \textit{monoid free product} $M_1 \ast M_2$ of $M_1$ and $M_2$ is defined as the monoid with presentation $\pres{Mon}{A_1 \cup A_2}{R_1 \cup R_2}$. Thus the monoid free product identifies the identity elements of the free factors, the free product of two trivial monoids is trivial, and the monoid free product of groups coincides with the (group) free product of the groups (see e.g. \cite[p. 266]{Howie1995}). We note in passing the obvious fact that $(M_1 \ast M_2)^\trev = M_1^\trev \ast M_2^\trev$.

Let $u \equiv u_0 u_1 \cdots u_n$ be a word in $(A_1 \cup A_2)^\ast$ such that $u_i \in A_{X(i)}^\ast$ for all $0 \leq i \leq n$, where $X$ is a standard parametrisation. Obviously, any word in $(A_1 \cup A_2)^\ast$ can be written uniquely as such a product. We will therefore make little distinction between the word $u$ and its factorisation. We say that (the factorisation of) $u$ is \textit{reduced} if it is empty, or else $u_i \neq_{M_{X(i)}} 1$ for all $0 \leq i \leq n$. Obviously, by removing subwords equal to $1$, for every $u$ as above there exists $u' \in (A_1 \cup A_2)^\ast$ such that $u =_{M_1 \ast M_2} u'$ and such that $u' \equiv u_0' u_1' \cdots u_m'$ is reduced. We call any such $u'$ a \textit{reduced form} of  $u$. Reduced forms need not be unique. However, any reduced form $u'$ of $u$ clearly only has one factorisation $u_0' u_1' \cdots u_m'$ such that this product is reduced. We call this factorisation $u_0' u_1' \cdots u_m'$ the \textit{normal form} of the reduced form $u'$. We then have the following:

\begin{lemma}[{See e.g. Howie \cite[\S8.2]{Howie1995}}]\label{Lem:Monoid_free_product_normal_form}
Let $M_1, M_2$ be two monoids, generated by disjoint sets $A_1, A_2$, respectively. Let $M = M_1 \ast M_2$ denote their monoid free product. Let $u, v \in (A_1 \cup A_2)^\ast$, and let $u'$ resp. $v'$ be any reduced forms for $u$ resp. $v$, with normal forms
\[
u' \equiv u_0 u_1 \cdots u_m \quad \textnormal{resp.} \quad v' \equiv v_0 v_1 \cdots v_n.
\]
Then $u =_M v$ if and only if 
\begin{enumerate}
\item $n=m$, and
\item $u_i, v_i \in A_{X(i)}^\ast$ and $u_i =_{M_{X(i)}} v_i$ for all $0 \leq i \leq n$,
\end{enumerate}
where $X$ is a standard parametrisation.
\end{lemma}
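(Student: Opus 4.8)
For the reverse implication I would just observe that if $m = n$ and $u_i =_{M_{X(i)}} v_i$ for every $i$, then since the relations $R_{X(i)}$ of each free factor are among the defining relations $R_1 \cup R_2$ of $M$, each of those equalities already holds in $M$, so concatenating them gives $u' =_M v'$; combined with $u =_M u'$ and $v =_M v'$ (by definition of a reduced form) this gives $u =_M v$. So all of the content lies in the forward implication, and for it I would use the standard van der Waerden / transformation-monoid construction of the free product, rather than trying to reduce to the semigroup normal form Lemma~\ref{Lem:Semigroup_free_product_normalforms} --- a reduction that is delicate precisely because a monoid free factor can collapse a generator to the identity in a way a semigroup one cannot.

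Concretely, I would let $\Omega$ be the set of all finite (possibly empty) sequences $(g_1, \dots, g_k)$ with each $g_j \in (M_1 \setminus \{1\}) \cup (M_2 \setminus \{1\})$ and with $g_j$ and $g_{j+1}$ lying in different factors for every $j$, and write $()$ for the empty sequence. For $i \in \{1, 2\}$ I would define an action of the monoid $M_i$ on $\Omega$ by the obvious rule: an element $g \in M_i$ is absorbed when $g = 1$; otherwise it is multiplied into the adjacent coordinate when that coordinate lies in $M_i$ (deleting the coordinate if the product equals $1$), and is inserted as a new coordinate when the adjacent coordinate lies in the other factor or the sequence is empty. The one real verification here --- and the step I expect to be the main obstacle, though it is entirely mechanical --- is that this really defines a monoid action, i.e. that $(gh) \cdot \omega = g \cdot (h \cdot \omega)$ for all $g, h \in M_i$; this comes down to a short case analysis keyed on whether the relevant products in $M_i$ collapse to $1$. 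Granting it, the two actions combine --- via the universal property of the monoid free product, which is precisely the presentation $\pres{Mon}{A_1 \cup A_2}{R_1 \cup R_2}$ that defines $M = M_1 \ast M_2$ --- into an action of $M$ on $\Omega$; for a word $w \in (A_1 \cup A_2)^\ast$ I would write $w \cdot \omega$ for the result of acting on $\omega$ by the element of $M$ represented by $w$, computed by reading $w$ one letter at a time.

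Next I would prove, by a straightforward induction on $|w|$, that if $w \equiv w_0 w_1 \cdots w_k$ is reduced in the sense of \S\ref{Subsec:Free_products} (so $w_j \neq_{M_{X(j)}} 1$ for all $j$), then $w \cdot ()$ equals $(\overline{w_0}, \dots, \overline{w_k})$, where $\overline{w_j}$ denotes the image of $w_j$ in $M_{X(j)}$. The forward implication then drops out: given $u =_M v$ with reduced forms $u', v'$ having normal forms $u_0 u_1 \cdots u_m$ and $v_0 v_1 \cdots v_n$, from $u =_M u'$ and $v =_M v'$ we get $u' =_M v'$, hence $u' \cdot () = v' \cdot ()$, i.e. $(\overline{u_0}, \dots, \overline{u_m}) = (\overline{v_0}, \dots, \overline{v_n})$ in $\Omega$; equality of two such sequences forces $m = n$ and $\overline{u_i} = \overline{v_i}$ for each $i$, and since $\overline{u_0} = \overline{v_0} \neq 1$ the words $u'$ and $v'$ begin in the same free factor and so determine the same standard parametrisation $X$, whence $u_i, v_i \in A_{X(i)}^\ast$ and $u_i =_{M_{X(i)}} v_i$ for all $0 \le i \le n$, as required. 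The degenerate cases in which a reduced form is empty are handled by the same computation, since the empty word fixes $()$ under the action while no non-empty reduced word does. This is, in essence, the proof in Howie's book, and I would expect it to be short once the action-is-well-defined verification is dispatched.
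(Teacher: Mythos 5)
Your argument is correct, but it cannot be compared line-by-line with the paper, because the paper offers no proof of this lemma at all: it is quoted as background with a citation to Howie (\S 8.2), and the paper's later arguments simply use it as a black box. What you supply is the standard transformation-monoid (van der Waerden--style) proof, and it is sound. The reverse implication is exactly as you say. For the forward implication, the one step you defer -- that the stated rule really gives a monoid action of each $M_i$ on the set $\Omega$ of alternating sequences -- does go through, and it is worth recording why, since monoids (unlike groups) admit one-sided units and this is precisely where a naive check could fail: in the only delicate case, where $g,h \in M_i \setminus \{1\}$, the first coordinate is $\bar{x} \in M_i \setminus \{1\}$, and both $gh = 1$ and $h\bar{x} = 1$, associativity in $M_i$ gives $g = g(h\bar{x}) = (gh)\bar{x} = \bar{x}$, so $(gh)\cdot\omega$ and $g\cdot(h\cdot\omega)$ agree; all other cases are immediate. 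Two cosmetic points: with a left action you should read the word right-to-left (or set things up as a right action and read left-to-right), and the coordinates of $\Omega$ should be taken in the \emph{disjoint} union of $M_1 \setminus \{1\}$ and $M_2 \setminus \{1\}$ so that equality of sequences really forces corresponding blocks into the same factor -- which is what justifies your claim that $u'$ and $v'$ determine the same standard parametrisation. Compared with deducing the monoid case from Lemma~\ref{Lem:Semigroup_free_product_normalforms}, which is awkward exactly because generators may represent $1$ in a free factor, your self-contained construction handles the empty and collapsing cases uniformly, at the cost of the (mechanical) action verification; either route is legitimate, and yours is essentially the textbook proof the paper points to.
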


We shall presently introduce language-theoretic tools to interpret Lemma~\ref{Lem:Monoid_free_product_normal_form}.

\clearpage
\section{A linguistic toolbox}\label{Sec:Toolbox}

\noindent This present section is divided into three parts. In the first, we provide a new, more combinatorial, definition of \textit{super-}$\AFL$s, first introduced by Greibach \cite{Greibach1970}. In the second, we study \textit{alternating products} of languages of a certain form, and prove that these products preserve language-theoretic properties of their factors. Informally, alternating products are the language-theoretic equivalent of semigroup free products. Finally, in the third part, we study \textit{bipartisan ancestors}, and prove further preservation properties. Informally, bipartisan ancestors are a language-theoretic method for dealing with the fact that the identity elements of the factors are identified in a monoid free product. 

\subsection{Super-$\AFL$s}

The theorems in this paper involve a special type of classes of languages, called \textit{super-$\AFL$s}. These were introduced by Greibach \cite{Greibach1970}. The defining property of these classes of languages is somewhat cumbersome to state directly. We follow Book, Jantzen \& Wrathall \cite{Book1982b} in this.

Let $A$ be an alphabet. For each $a \in A$, let $\sigma(a)$ be a language (over any finite alphabet); let $\sigma(\varepsilon) = \{ \varepsilon\}$; for every $x, y \in A^\ast$ let $\sigma(xy) = \sigma(x)\sigma(y)$; and for every $L \subseteq A^\ast$, let $\sigma(L) = \bigcup_{w\in L} \sigma(w)$. We then say that $\sigma$ is a \textit{substitution}. For a class $\cc$ of languages, if for every $a \in A$ we have $\sigma(a) \in \cc$, then we say that $\sigma$ is a $\cc$-\textit{substitution}. Let $A$ be an alphabet, and $\sigma$ a substitution on $A$. For every $a \in A$, let $A_a$ denote the smallest finite alphabet such that $\sigma(a) \subseteq A_a^\ast$. Extend $\sigma$ to $A \cup (\bigcup_{a \in A}A_a)$ by defining $\sigma(b) = \{ b \}$ whenever $b \in (\bigcup_{a \in A} A_a) \setminus A$. For $L \subseteq A^\ast$, let $\sigma^1(L) = \sigma(L)$, and let $\sigma^{n+1}(L) = \sigma(\sigma^n(L))$ for $n \geq 1$. Let $\sigma^\infty(L) = \bigcup_{n> 0} \sigma^n(L)$. Then we say that $\sigma^\infty$ is an \textit{iterated substitution}. If for every $b \in A \cup (\bigcup_a A_a)$ we have $b \in \sigma(b)$, then we say that $\sigma^\infty$ is a \textit{nested} iterated substitution. If $\sigma^\infty$ is nested, then it is convenient for inductive purposes to set $\sigma^0(L) := L$. Note that the nested property ensures $L \subseteq \sigma(L)$, so $\bigcup_{n \geq 0} \sigma^n(L) = \bigcup_{n>0} \sigma^n(L)$. We say that $\cc$ is closed under nested iterated substitution if for every $\cc$-substitution $\sigma$ and every $L \in \cc$, we have: if $\sigma^\infty$ is a nested iterated substitution, then $\sigma^\infty(L) \in \cc$. 

We will now give a closely related definition, phrased in terms of rewriting systems, which is more combinatorial in nature.

\begin{definition}
Let $\cc$ be a class of languages. Let $\cR \subseteq A^\ast \times A^\ast$ be a rewriting system. Then we say that $\cR$ is $\cc$\textit{-ancestry preserving} if for every $L \subseteq A^\ast$ with $L \in \cc$, we have $\langle L \rangle_\cR \in \cc$. If every monadic $\cc$-rewriting system is $\cc$-ancestry preserving, then we say that $\cc$ has the \textit{monadic ancestor property}.
\end{definition}

\begin{example}
If $\cR \subseteq A^\ast \times A^\ast$ is a monadic context-free rewriting system, and $L \subseteq A^\ast$ is a context-free language, then $\langle L \rangle_\cR$ is a context-free language \cite[Theorem~2.2]{Book1982b}. That is, every monadic $\CF$-rewriting system is $\CF$-ancestry preserving; in other words, the class of context-free languages has the monadic ancestor property. 
\end{example}

We will presently show a straightforward proposition (namely Proposition~\ref{Prop:MAP<=>NIS}), which connects the monadic ancestor property with the somewhat complicated notion of nested iterated substitutions. Before this, we need a slightly technical, but straightforward to prove, lemma, concerning monadic rewriting systems.

\begin{lemma}\label{Lem:Do_erasings_first}
Let $\cR^{(m)} \subseteq A^\ast \times A^\ast$ be a monadic rewriting system. Let $A_1 \subseteq A$, and let $\cR^{(1)} \subseteq A^\ast \times A^\ast$ be the rewriting system with rules $\{ (\varepsilon, a) \mid a \in A_1\}$. Let $\cR = \cR^{(1)} \cup \cR^{(m)}$. Then for any $L \subseteq A^\ast$, we have $\langle L \rangle_{\cR} = \langle \langle L \rangle_{\cR^{(m)}} \rangle_{\cR^{(1)}}$.
\end{lemma}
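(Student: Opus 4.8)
The plan is to prove the two inclusions of the asserted equality separately. The inclusion $\langle\langle L\rangle_{\cR^{(m)}}\rangle_{\cR^{(1)}} \subseteq \langle L\rangle_{\cR}$ is immediate: since $\cR^{(1)} \subseteq \cR$ and $\cR^{(m)} \subseteq \cR$, any rewriting $w \xra{\cR^{(1)}} w' \xra{\cR^{(m)}} x$ witnessing membership of $w$ in the left-hand side (with $x \in L$) is in particular a rewriting $w \xra{\cR} x$, so $w \in \langle L\rangle_{\cR}$. For the reverse inclusion, take $w \in \langle L\rangle_{\cR}$, so that $w \xra{\cR} x$ for some $x \in L$; the goal is to produce an intermediate word $w'$ with $w \xra{\cR^{(1)}} w' \xra{\cR^{(m)}} x$. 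The idea is to reorder a fixed $\cR$-rewriting sequence from $w$ to $x$ so that every application of a rule from $\cR^{(1)}$ (i.e., every insertion of a letter of $A_1$) is performed before every application of a rule from $\cR^{(m)}$.

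The whole argument reduces to a single \emph{local commutation claim}: if $u \xr{\cR^{(m)}} v$ and $v \xr{\cR^{(1)}} z$, then there is a word $v'$ with $u \xr{\cR^{(1)}} v'$ and $v' \xr{\cR^{(m)}} z$. To prove it, write the $\cR^{(m)}$-step as $u \equiv \alpha \ell \beta$, $v \equiv \alpha r \beta$ for a rule $(\ell, r) \in \cR^{(m)}$, and write the $\cR^{(1)}$-step as the insertion of a letter $a \in A_1$ at some position $p$ of $v \equiv \alpha r \beta$. There are essentially two cases, according to whether $p$ lies in the prefix $\alpha$ (including the boundary between $\alpha$ and $r$) or in the suffix $\beta$ (including the boundary between $r$ and $\beta$): in each case one inserts $a$ at the corresponding position of $u$ to obtain $v'$ and then applies $(\ell, r)$ to reach $z$ (writing $\alpha \equiv \alpha_1 \alpha_2$ resp. $\beta \equiv \beta_1 \beta_2$ at the relevant split point makes this a one-line verification in each case).

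The only step requiring care is ruling out the remaining possibility, that $p$ lies strictly inside the factor $r$ — and this is exactly where monadicity of $\cR^{(m)}$ is used: since $r \in A \cup \{\varepsilon\}$ has length at most $1$, there is no position strictly inside $r$, so this case is vacuous. (Without monadicity the claim genuinely fails: a contiguous right-hand side of length $\geq 2$ cannot be split by an insertion performed before the rule is applied.) This is the point I expect to be the crux of the proof, though it is hardly an obstacle once isolated.

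Finally, I propagate the local commutation claim to the full sequence by induction on the length $k$ of a fixed $\cR$-rewriting $w \equiv w_0 \xr{\cR} w_1 \xr{\cR} \cdots \xr{\cR} w_k \equiv x$ with $x \in L$. For $k = 0$ there is nothing to do. For $k \geq 1$, the inductive hypothesis applied to $w_1 \xra{\cR} x$ gives $w_1 \xra{\cR^{(1)}} w'' \xra{\cR^{(m)}} x$; if the first step $w_0 \xr{\cR} w_1$ uses a rule of $\cR^{(1)}$ we are done immediately, while if it uses a rule of $\cR^{(m)}$ we slide it rightward through the block $w_1 \xra{\cR^{(1)}} w''$ one $\cR^{(1)}$-step at a time, each slide being an instance of the claim, to obtain $w_0 \xra{\cR^{(1)}} w' \xr{\cR^{(m)}} w'' \xra{\cR^{(m)}} x$ for a suitable $w'$ (an equivalent, slightly more pedestrian route is a bubble-sort argument on the number of ``$\cR^{(m)}$ before $\cR^{(1)}$'' inversions in the sequence). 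In either case $w \xra{\cR^{(1)}} w' \xra{\cR^{(m)}} x$, so $w' \in \langle L\rangle_{\cR^{(m)}}$ and hence $w \in \langle\langle L\rangle_{\cR^{(m)}}\rangle_{\cR^{(1)}}$, completing the proof.
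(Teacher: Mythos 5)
Your proof is correct and follows essentially the same route as the paper's: an induction on the length of the $\cR$-rewriting sequence whose crux is commuting an $\cR^{(m)}$-step backwards past insertions, which works precisely because right-hand sides of monadic rules have length at most one, so an inserted letter can never land strictly inside one. The only difference is cosmetic: you swap past one $\cR^{(1)}$-step at a time via a local commutation claim, whereas the paper commutes the single $\cR^{(m)}$-step past the whole block of insertions at once by writing the inserted letters as interspersed factors $s_1,\dots,s_{n+1}$.
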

\begin{proof}
For ease of notation, we denote $\xr{\cR^{(1)}}$ by $\xr{(1)}$; we denote $\xr{\cR^{(m)}}$ by $\xr{(m)}$; and $\xr{\cR}$ by $\xr{(\cup)}$. The notation is extended to $\xra{(1)}, \xra{(m)}, \xra{(\cup)}$, etc.

Let $u, w \in A^\ast$ be such that $w \in \langle u \rangle_{\cR}$. Then $w \xra{(\cup)} u$, so $w \xr{(\cup)}^k u$ for some $k \geq 0$. It suffices to show that there is some $v \in A^\ast$ such that $w \xra{(1)} v \xra{(m)} u$. We prove this claim by induction on $k$. The case $k=0$ is trivial, for then we can take $v \equiv w (\equiv u)$. Suppose $k>0$ and let $u_0, u_1, \dots, u_k \in A^\ast$ be such that 
\begin{equation}\label{Eq:Rewriting_in_technical_lemma}
w \equiv u_0 \xr{(\cup)} u_1 \xr{(\cup)} \cdots \xr{(\cup)} u_{k-1} \xr{(\cup)} u_k \equiv u.
\end{equation}
By the inductive hypothesis, there exists $v_1 \in A^\ast$ such that $u_1 \xra{(1)} v_1 \xra{(m)} u$. If the rewriting $u_0 \xr{(\cup)} u_1$ in \eqref{Eq:Rewriting_in_technical_lemma} is a rewriting $u_0 \xr{(1)} u_1$, then we may take $v \equiv v_1$. Suppose, then, that $u_0 \xr{(m)} u_1$. Let $(s, a_j) \in \cR^{(m)}$ be the rule applied to rewrite $u_0 \xr{(m)} u_1$, where $a_j \in A \cup \{ \varepsilon \}$ and $s \in A^\ast$ is non-empty. Write $u_1 \equiv a_1 a_2 \cdots a_{j-1} a_j a_{j+1} \cdots a_n$ for some $n \geq 0$ (where $n=0$ means $u_1 \equiv \varepsilon$) and $a_i \in A$ for $1 \leq i \leq n, i \neq j$. Then $u_0 \equiv a_1 a_2 \cdots a_{j-1} s a_{j+1} \cdots a_n$. Now, as $u_1 \xra{(1)} v_1$, we have that 
\[
v_1 \equiv s_1 a_1 s_2 a_2 \cdots s_j a_j s_{j+1} \cdots s_n a_n s_{n+1}
\]
where the $s_i$ are such that $\varepsilon \xra{(1)} s_i$ for $1 \leq i \leq n+1$. Let 
\[
v_1' \equiv s_1 a_1 s_2 a_2 \cdots s_j s s_{j+1} \cdots s_n a_n s_{n+1}.
\]
Then $v_1' \xr{(m)} v_1$ by using the rule $(s, a_j)$, so $v_1' \xra{(m)} u$. Furthermore, as $\varepsilon \xra{(1)} s_i$ for every $1 \leq i \leq n+1$, we have $u_0 \xra{(1)} v_1'$. That is, $w \xra{(1)} v_1' \xra{(m)} u$, so we can take $v \equiv v_1'$. 
\end{proof}

We can now proceed with the main proposition of this section. 

\begin{proposition}\label{Prop:MAP<=>NIS}
Let $\cc$ be an $\AFL$. Then $\cc$ is closed under nested iterated substitution if and only if it has the monadic ancestor property.
\end{proposition}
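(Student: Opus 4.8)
The plan is to prove the two implications separately; in each direction I convert between a $\cc$-substitution and a monadic $\cc$-rewriting system whose ancestor operation computes the corresponding iterated substitution, using Lemma~\ref{Lem:Do_erasings_first} to absorb the mismatch between rules that contract to a letter and rules that contract to $\varepsilon$. Throughout I use freely the standard closure properties of an $\AFL$ --- in particular closure under finite substitution --- and the fact that every $\AFL$ contains all regular languages.

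\emph{Monadic ancestor property implies closure under nested iterated substitution.} Let $\sigma$ be a $\cc$-substitution on $A$ with $\sigma^\infty$ nested, and let $L \in \cc$. Work on the alphabet $B = A \cup \bigcup_a A_a$ on which $\sigma$ is defined, set $A_1 = \{ a \in A \mid \varepsilon \in \sigma(a) \}$, and consider $\cR^{(m)} = \{ (u,a) \mid a \in A, \ u \in \sigma(a) \cap B^+ \}$, $\cR^{(1)} = \{ (\varepsilon, a) \mid a \in A_1 \}$, and $\cR = \cR^{(m)} \cup \cR^{(1)}$; since each $\sigma(a) \cap B^+$ is in $\cc$, the system $\cR^{(m)}$ is a monadic $\cc$-rewriting system. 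The central step is the combinatorial identity $\langle L \rangle_\cR = \sigma^\infty(L)$: the inclusion $\sigma^\infty(L) \subseteq \langle L \rangle_\cR$ follows by rewriting each factor of a word of $\sigma(y)$ down to the corresponding letter of $y$ --- via a monadic rule of $\cR^{(m)}$, or an insertion rule of $\cR^{(1)}$ when the factor is empty --- and iterating; the reverse inclusion is an induction on the number of $\cR$-steps, where the nestedness condition $b \in \sigma(b)$ provides the ``identity option'' that lets one simulate a single rewriting step by one application of $\sigma$. Combined with Lemma~\ref{Lem:Do_erasings_first}, which gives $\langle L \rangle_\cR = \langle \langle L \rangle_{\cR^{(m)}} \rangle_{\cR^{(1)}}$, this reduces the task to showing $\langle \langle L \rangle_{\cR^{(m)}} \rangle_{\cR^{(1)}} \in \cc$: the inner language is in $\cc$ by the monadic ancestor property, and passing to ancestors under $\cR^{(1)}$ is exactly applying the finite substitution that sends each $a \in A_1$ to $\{ a, \varepsilon \}$ and fixes every other letter, which preserves $\cc$.

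\emph{Closure under nested iterated substitution implies the monadic ancestor property.} Let $\cR \subseteq A^\ast \times A^\ast$ be a monadic $\cc$-rewriting system and $L \in \cc$. The obstruction to imitating $\langle \cdot \rangle_\cR$ by a substitution is that a rule $(u, \varepsilon)$, read backwards, inserts $u$ at an arbitrary position, whereas a substitution acts letter by letter; I get around this by marking every potential insertion site with a fresh symbol $\diamond \notin A$. Let $f \colon (A \cup \{ \diamond \})^\ast \to A^\ast$ be the homomorphism erasing $\diamond$, let $L' = f^{-1}(L) \cap \diamond (A \diamond)^\ast \in \cc$ be the ``$\diamond$-padded'' copy of $L$, and define a substitution $\sigma$ on $A \cup \{ \diamond \}$ by setting, for $a \in A$,
\[
\sigma(a) = \{ a \} \cup \{ \diamond u_1 \diamond \cdots \diamond u_k \diamond \mid (u_1 \cdots u_k, a) \in \cR \}
\]
and
\[
\sigma(\diamond) = \{ \diamond \} \cup \{ \diamond u_1 \diamond \cdots \diamond u_k \diamond \mid (u_1 \cdots u_k, \varepsilon) \in \cR, \ k \geq 1 \}.
\]
Each $\sigma(b)$ lies in $\cc$ and $\sigma^\infty$ is nested (as $b \in \sigma(b)$), so $\sigma^\infty(L') \in \cc$ by hypothesis. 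The claim is $\langle L \rangle_\cR = f(\sigma^\infty(L'))$, which then places $\langle L \rangle_\cR$ in $\cc$. The inclusion ``$\supseteq$'' is immediate: every substitution step $b \mapsto y$ with $y \in \sigma(b)$ satisfies $f(y) \xra{\cR} f(b)$ (at most one $\cR$-step), so $f$ carries a $\sigma^\infty$-derivation starting from the padded copy of some $x \in L$ to an $\cR$-derivation ending at $x$. For ``$\subseteq$'', induct on the length of a derivation $w \xra{\cR} x$: lift $x$ to its padded copy, and to realise the next rule $(u, a)$ (resp.\ $(u, \varepsilon)$), expand in the word lifting the previous step the letter $a$ (resp.\ a suitably placed $\diamond$) by $\sigma$; a $\diamond$ is available wherever it is needed because of the invariant --- preserved by $\sigma$ via a routine induction --- that every word of $\sigma^\infty(L')$ has at least one $\diamond$ between any two consecutive letters of $A$ and at least one at each end.

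The main obstacle is the second implication, and within it the design of the encoding: one must simultaneously arrange that $\varepsilon$-rules of $\cR$ can be simulated despite attaching to no letter, and that the marker consumed by a reverse insertion is regenerated by $\sigma$ so that subsequent insertions remain possible --- this is exactly what forces the padding convention and the shape of $\sigma(\diamond)$. The two combinatorial identities $\langle L \rangle_\cR = \sigma^\infty(L)$ and $\langle L \rangle_\cR = f(\sigma^\infty(L'))$ carry the real content; everything else is bookkeeping with the closure properties noted at the outset.
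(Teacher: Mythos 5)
Your proof is correct, and it is worth separating the two directions when comparing with the paper. For the implication ``monadic ancestor property $\Rightarrow$ closure under nested iterated substitution'' your argument is essentially the paper's: the same splitting of the induced system into a monadic part $\cR^{(m)}$ and the $\varepsilon$-insertion rules $\cR^{(1)}$, the same appeal to Lemma~\ref{Lem:Do_erasings_first}, the same combinatorial identity $\langle L \rangle_\cR = \sigma^\infty(L)$ proved by the two inductions, and the same disposal of the $\cR^{(1)}$-ancestors (your finite substitution $a \mapsto \{a,\varepsilon\}$ is exactly the paper's rational transduction $\tau$, and any $\AFL$ in the paper's sense is closed under rational transductions). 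For the converse, however, the paper gives no argument of its own: it only says the proof is that of Book--Jantzen--Wrathall's Theorem~2.2 \emph{mutatis mutandis}. Your $\diamond$-padding construction is therefore a genuine, self-contained contribution for that direction, and it checks out: $\sigma$ is a nested $\cc$-substitution (each $\sigma(a)$ and $\sigma(\diamond)$ is obtained from a rule-language of $\cR$ by inverse homomorphism, intersection with the regular set $\diamond(A\diamond)^\ast$, and union with a singleton), $L' = f^{-1}(L) \cap \diamond(A\diamond)^\ast \in \cc$, the invariant that every word of $\sigma^\infty(L')$ lies in $\diamond^+(A\diamond^+)^\ast$ is preserved because every non-identity image under $\sigma$ begins and ends with $\diamond$ and no image is empty, and this invariant is precisely what guarantees a $\diamond$ at the site where a rule $(u,\varepsilon)$ must be reversed; the final passage $\langle L \rangle_\cR = f(\sigma^\infty(L')) \in \cc$ uses closure under the erasing homomorphism $f$, which is legitimate since the paper's $\AFL$s are closed under arbitrary homomorphisms. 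One small point to make explicit if you write this up: in the inductive step of ``$\subseteq$'' you expand one designated letter and must keep all other letters fixed, which uses nestedness ($b \in \sigma(b)$) there as well, not only in the other direction.
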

\begin{proof}
The proof of the forward implication is the same, \textit{mutatis mutandis}, as the proof of \cite[Theorem~2.2]{Book1982b}. For the reverse, suppose $\cc$ has the monadic ancestor property. Let $\sigma$ be a $\cc$-substitution such that $\sigma^\infty$ is a nested iterated substitution on the alphabet $A$, and let for every $a \in A$ the language $A_a$ be as earlier. Let $L \subseteq A^\ast$ be such that $L \in \cc$. We must show that $\sigma^\infty(L) \in \cc$. Let $\Sigma = A \cup (\bigcup_{a \in A} A_a)$, and define a rewriting system $\cR_\sigma \subseteq \Sigma^\ast \times \Sigma^\ast$ as the system
\begin{equation}\label{Eq:cr_sigma_rules}
\cR_\sigma = \bigcup_{a \in \Sigma} \bigcup_{w \in \sigma(a)} \{ (w, a) \}.
\end{equation}
This is not, in general, a monadic system, as there can be rules of the form $(\varepsilon, a)$ in $\cR_\sigma$. Partition $\cR_\sigma$ as $\cR_\sigma^{(1)} \cup \cR_\sigma^{(m)}$, where $\cR_\sigma^{(1)}$ is the set of all rules of the form $(\varepsilon, a)$ for some $a \in \Sigma$, and $\cR_\sigma^{(m)} = \cR_\sigma - \cR_\sigma^{(1)}$. Then $\cR_\sigma^{(m)}$ is a monadic rewriting system. Furthermore, $\cR_\sigma^{(m)}$ is a $\cc$-rewriting system, as for any $a \in \Sigma$ the language of left-hand sides of rules with right-hand side $a$ is $\sigma(a)$ for every $a \in \Sigma$, and $\sigma(a) \in \cc$ as $\sigma$ is a $\cc$-substitution. Let $\Sigma_1 \subseteq \Sigma$ be the set of letters $\{ a \mid (\varepsilon, a) \in \cR_\sigma^{(1)}\}$.

Let $\tau \subseteq \Sigma^\ast \times \Sigma^\ast$ be defined by $\tau = \bigcup_{a \in \Sigma_1}((a,\varepsilon)^\ast \cup (a,a)^\ast)$. Then $\tau$ is a rational transduction. Furthermore, for any language $K \subseteq \Sigma^\ast$, it is easy to see that $\langle K \rangle_{\cR_\sigma^{(1)}} = \tau(K)$. That is, informally speaking, any ancestor of a word $w$ under $\cR_\sigma^{(1)}$ can be obtained from $w$ by deleting some number of letters from $\Sigma_1$. 

We claim $\langle L \rangle_{\cR_\sigma}  = \sigma^\infty(L)$. This would complete the proof; indeed, by Lemma~\ref{Lem:Do_erasings_first}, we have $\langle L \rangle_{\cR_\sigma} = \langle \langle L \rangle_{\cR_\sigma^{(m)}} \rangle_{\cR_\sigma^{(1)}}$. Hence
\begin{equation}\label{Eq:L_is_ancestor_superAFL}
\langle L \rangle_{\cR_\sigma} = \langle \langle L \rangle_{\cR_\sigma^{(m)}} \rangle_{\cR_\sigma^{(1)}} = \tau \langle L \rangle_{\cR_\sigma^{(m)}}.
\end{equation}
As $\cR_\sigma^{(m)}$ is a monadic $\cc$-rewriting system, and $\cc$ has the monadic ancestor property, it follows that $\langle L \rangle_{\cR_\sigma^{(m)}} \in \cc$. As $\cc$ is an $\AFL$, it is closed under rational transduction, so the right-hand side, and thereby also the left-hand side $\langle L \rangle_{\cR_\sigma}$, of \eqref{Eq:L_is_ancestor_superAFL} is in $\cc$. We prove the claimed equality. 

$(\subseteq)$ If $w \in \langle L \rangle_{\cR_\sigma}$, there exists $u \in L$ and $n \geq 0$ such that $w \xr{\cR_\sigma}^n u$. We prove that there exists $k \geq 1$ such that $w \in \sigma^k(L)$ by induction on this $n$. This would prove $w \in \sigma^\infty(L)$. If $n=0$, then $w \equiv u$, and as $\sigma^\infty$ is nested we have $L \subseteq \sigma(L) \subseteq \sigma^\infty(L)$, and we are done. Assume $n>0$. Then there exists $w' \in \langle u \rangle_{\cR_\sigma}$ such that $w \xr{\cR_\sigma} w' \xr{\cR_\sigma}^{n-1} u$. By the inductive hypothesis there exist $k'\geq 1$ such that $w' \in \sigma^{k'}(L)$. As $w \xr{\cR_\sigma} w'$, there is a rule $(r, s) \in \cR_\sigma$ such that $w \equiv w_0 r w_1$ and $w' \equiv w_0 s w_1$. But from \eqref{Eq:cr_sigma_rules}, we have $s \in \Sigma$ and $r \in \sigma(s)$. Hence 
\begin{align*}
w \equiv w_0 r w_1 \in w_0 \sigma(s) w_1 \subseteq \sigma(w_0) \sigma(s) \sigma(w_1) = \sigma(w_0 s w_1) = \sigma(w') \subseteq \sigma(\sigma^{k'}(L)).
\end{align*}
Note that the inclusion $\{ x \} \subseteq \sigma(x)$ for $x \in A^\ast$ follows from the fact that $\sigma^\infty$ is nested. As $\sigma(\sigma^{k'}(L)) = \sigma^{k'+1}(L)$, we can take $k = k'+1$. 

$(\supseteq)$ Suppose $w \in \sigma^\infty(L)$. Then there exists $n \geq 0$ such that $w \in \sigma^n(L)$. We prove $w \in \langle L \rangle_{\cR_\sigma}$ by induction on this $n$. If $n=0$, then (by our convention), $w \in L$, so there is nothing to show. Assume $n>0$. Then there exists some $u \in \sigma^{n-1}(L)$ such that $w \in \sigma(u)$. 

By the inductive hypothesis $u \in \langle L \rangle_{\cR_\sigma}$. Write $u \equiv a_1 a_2 \cdots a_k$, where $k \geq 1$ and $a_i \in A$ for $1 \leq i \leq k$. Then $\sigma(u) = \sigma(a_1) \sigma(a_2) \cdots \sigma(a_k)$. Hence $w \equiv w_1 w_2 \cdots w_k$ for some $w_i \in \sigma(a_i)$ for $1 \leq i \leq k$. In particular, $(w_i, a_i) \in \cR_\sigma$ for every $1 \leq i \leq k$. Hence we find 
\[
w \equiv w_1 w_2 \cdots w_k \xra{\cR_\sigma} a_1 a_2 \cdots a_k \equiv u,
\]
so $w \in \langle u \rangle_{\cR_\sigma} \subseteq \langle \langle L \rangle_{\cR_\sigma} \rangle_{\cR_\sigma} = \langle L \rangle_{\cR_\sigma}$, which is what was to be shown. 
\end{proof}

In view of Proposition~\ref{Prop:MAP<=>NIS}, we provide the following definition of a super-$\AFL$.

\begin{definition}\label{Def:super-AFL}
Let $\cc$ be an $\AFL$. Then $\cc$ is said to be a \textit{super-$\AFL$} if it satisfies one of the following two equivalent conditions:
\begin{enumerate}
\item $\cc$ has the monadic ancestor property;
\item $\cc$ is closed under nested iterated substitution.
\end{enumerate}
\end{definition}

Definition~\ref{Def:super-AFL}(2) is the original definition as it was given by Greibach \cite{Greibach1970}. The monadic ancestor property, being directly phrased in terms of rewriting systems, is more directly applicable to our algebraic setting. Throughout the remainder of this article, a super-$\AFL$ will thus be defined by Definition~\ref{Def:super-AFL}(1).

\begin{example} 
The following are examples of super-$\AFL$s:
\begin{enumerate}
\item The class $\CF$ of context-free languages; this is proved via Definition~\ref{Def:super-AFL}(1) in \cite[Theorem~1.2]{Kral1970}, and via Definition~\ref{Def:super-AFL}(2) in \cite[Theorem~2.2]{Book1982b}. 
\item The class $\IND$ of indexed languages \cite{Aho1968, Engelfriet1985}
\end{enumerate}
The following are examples of classes of languages which are \textbf{not} super-$\AFL$s:
\begin{enumerate}\setcounter{enumi}{2}
\item The class $\REG$, e.g. by considering $\cR = \{ (bc, \varepsilon)\}$ and $\langle \varepsilon \rangle_\cR \not\in \REG$.
\item The class $\DCF$ of deterministic context-free languages (it is not an $\AFL$).
\item If $\cc$ is a super-$\AFL$, then $\CF \subseteq \cc$, by \cite[Theorem~2.2]{Greibach1970}. Hence, the class $\OC$ of one-counter languages is not a super-$\AFL$, as $\OC \subset \CF$. More generally, this shows that $\CF$ is the smallest super-$\AFL$.
\end{enumerate}
\end{example}

For more examples and generalisations, we refer the reader to the so-called \textit{hyper-$\AFL$s} defined by Engelfriet \cite{Engelfriet1985}, all of which are super-$\AFL$s.

\subsection{Alternating products}

We will introduce an operation on certain languages, which mimics the operation of the free product of semigroups. Throughout this section, we fix an alphabet $A$ and let $\#$ be a symbol not in $A$. Let $L \subseteq A^\ast \# A^\ast$ be any language. We say that $L$ is \textit{concatenation-closed} (with respect to $\#$) if 
\begin{equation}\label{Eq:Concatenation_closed_def}
u_1 \# v_1 \in L \textnormal{ and } u_2 \# v_2 \in L \implies u_1 u_2 \# v_2 v_1 \in L,
\end{equation}
where $u_1, v_1, u_2, v_2 \in A^\ast$.

\begin{example}
Let $M$ be any finitely generated monoid (or semigroup) with finite generating set $A$. If $u_1 \# v_1, u_2 \# v_2 \in \WP_A^M$, with $u_1, v_1, u_2, v_2 \in A^\ast$ (or $A^+$), then by definition of $\WP_A^M$ we have $u_1 =_M v_1^\trev$ and $u_2 =_M v_2^\trev$, so 
\[
u_1 u_2 =_M v_1^\trev v_2^\trev \equiv (v_2 v_1)^\trev,
\]
from which it follows that
\[
u_1 u_2 \# ((v_2v_1)^\trev)^\trev \equiv u_1 u_2 \# v_2 v_1 \in \WP_A^M.
\]
Hence, $\WP_A^M$ satisfies \eqref{Eq:Concatenation_closed_def}, and it is a concatenation-closed language. 
\end{example}

Given two concatenation-closed languages in $A^\ast \# A^\ast$, we now give an operation for combining them. Let $L_1, L_2 \subseteq A^\ast \# A^\ast$ be concatenation-closed. Then the \textit{alternating product} $L_1 \star L_2$ of $L_1$ and $L_2$ is defined as the language consisting of all words of the form: 
\begin{equation}\label{Eq:Def_alternating}
u_1 u_2 \cdots u_k \# v_k \cdots v_2 v_1,
\end{equation}
where for all $1 \leq i \leq k$ we have $u_i \# v_i \in L_{X(i)}$, with a standard parametrisation $X $ (see \S\ref{Subsec:Free_products}). In particular, $\# \in L_1 \star L_2$ if and only if $\# \in L_1$ or $\# \in L_2$. Alternating products are modelled on the (semigroup) free product, as the following example shows. 

\begin{example}
Let $L_1 = \{ a^n \# a^n \mid n \geq 0 \}$ and $L_2 = \{ b^n \# b^n \mid n \geq 0 \}$. Then, of course, $L_1 = \WP_{\{ a \}}^{\{ a \}^\ast}$ and $L_2 = \WP_{\{ b \}}^{\{ b \}^\ast}$. Now both languages $L_1$ and $L_2$ are concatenation-closed, and it is easy to see that we have
\[
L_1 \star L_2 = \{ a^{n_1} b^{n_2} \cdots  a^{n_k} \# a^{n_k} \cdots b^{n_2} a^{n_1} \mid k \geq 0; n_1, n_2, \dots, n_{k-1} \geq 1; n_k \geq 0 \}.
\]
Thus $L_1 \star L_2 = \{ w \# w^\trev \mid w \in \{ a, b \}^\ast \} = \WP_{\{a,b\}}^{\{a,b\}^\ast}$.
\end{example}

Using the monadic ancestor property, the following gives the main reason for why alternating products are useful. 

\begin{proposition}\label{Prop:L1,L2=>L1starL2_in_C}
Let $\cc$ be a super-$\AFL$. Let $L_1, L_2 \subseteq A^\ast \# A^\ast$ be concatenation-closed languages. Then $L_1, L_2 \in \cc \implies L_1 \star L_2 \in \cc$. 
\end{proposition}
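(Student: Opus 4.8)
The plan is to exhibit $L_1 \star L_2$ as the language of ancestors of $L_1 \cup L_2$ under a single monadic rewriting system assembled directly from $L_1$ and $L_2$, and then to invoke the monadic ancestor property, which every super-$\AFL$ enjoys by Definition~\ref{Def:super-AFL}.

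Concretely, I would work over the alphabet $B = A \cup \{ \# \}$, now regarding $\#$ as one of its letters, and form the rewriting system
\[
\cR = \{ (w, \#) : w \in L_1 \cup L_2 \} \subseteq B^\ast \times B^\ast .
\]
Every word of $L_1 \cup L_2$ contains exactly one occurrence of $\#$, hence has length at least $1 = |\#|$, so $\cR$ is monadic; and its only non-empty set of left-hand sides, namely $\{ w : (w,\#) \in \cR \} = L_1 \cup L_2$, lies in $\cc$ since $\cc$ is an $\AFL$ and thus closed under union. Therefore $\cR$ is a monadic $\cc$-rewriting system over $B$, and, $\cc$ being a super-$\AFL$, it has the monadic ancestor property; since $L_1 \cup L_2 \in \cc$ this yields $\langle L_1 \cup L_2 \rangle_\cR \in \cc$. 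We are then done once we prove the equality
\[
\langle L_1 \cup L_2 \rangle_\cR = L_1 \star L_2 .
\]

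For this equality I would first show, by induction on the number of rewriting steps, that $\langle L_1 \cup L_2 \rangle_\cR$ is precisely the set of words $u_1 u_2 \cdots u_k \, \# \, v_k \cdots v_2 v_1$ with $k \geq 1$ and $u_i \# v_i \in L_1 \cup L_2$ for every $i$. The point is that any word reducible to $L_1 \cup L_2$ contains a single $\#$, and each step of such a reduction contracts the factor $u_i \# v_i$ straddling that $\#$ down to $\#$ itself; no hypothesis on the $L_i$ beyond $L_i \subseteq A^\ast \# A^\ast$ is used at this stage. This set visibly contains $L_1 \star L_2$, since any standard parametrisation takes values in $\{1,2\}$. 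For the reverse inclusion I would use that $L_1$ and $L_2$ are concatenation-closed: given $u_1 \cdots u_k \# v_k \cdots v_1$ with each $u_i\#v_i$ in $L_1$ or $L_2$, whenever two consecutive pairs lie in the same $L_j$ we may replace them by $u_i u_{i+1} \# v_{i+1} v_i \in L_j$, leaving the underlying word $u_1\cdots u_k \# v_k\cdots v_1$ unchanged and decreasing $k$ by one; iterating terminates in a factorisation whose sides strictly alternate between $L_1$ and $L_2$, i.e. one witnessing membership in $L_1 \star L_2$ for the appropriate standard parametrisation (the case $k=1$, and the bare word $\#$, which lies in the product exactly when $\# \in L_1 \cup L_2$, are handled directly).

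The construction itself is short, and the only place needing genuine care is the displayed equality — in particular keeping honest track of the unique $\#$ through rewriting, and carrying out the concatenation-closure merging so as to land in the strictly-alternating normal form built into the definition of $L_1 \star L_2$. This is also the conceptual heart of the argument, since it is where the combinatorics of semigroup free products (Lemma~\ref{Lem:Semigroup_free_product_normalforms}) are rendered in purely language-theoretic terms; I do not anticipate any deeper obstacle.
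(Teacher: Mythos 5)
Your proposal is correct and follows essentially the same route as the paper: the identical monadic $\cc$-rewriting system $\{(w,\#) \mid w \in L_1 \cup L_2\}$, the monadic ancestor property, and concatenation-closure used to merge adjacent factors from the same $L_j$ into a strictly alternating factorisation. The only (harmless) variation is that you take ancestors of $L_1 \cup L_2$ rather than of the singleton $\{\#\}$, which gives $L_1 \star L_2$ on the nose and spares you the paper's final intersection with $A^\ast \# A^\ast - \{\#\}$ to discard the word $\#$.
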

\begin{proof}
Let $\cR_i = \{ (w, \#) \mid w \in L_i\}$ for $i = 1, 2$. By assumption $\cR_1, \cR_2$ are monadic $\cc$-rewriting systems, so $\cR := \cR_1 \cup \cR_2$ is also a monadic $\cc$-rewriting system. As $\cc$ has the monadic ancestor property and $\cc$ contains all singleton languages, we have $L := \langle \# \rangle_\cR \in \cc$. It suffices for the proposition to show that $L = (L_1 \star L_2) \cup \{ \# \}$, as if $\# \in L_1 \star L_2$ we conclude $L_1 \star L_2 \in \cc$; and if $\# \not\in L_1 \star L_2$, then 
\[
L_1 \star L_2 = L \cap (A^\ast \# A^\ast - \{ \# \}),
\]
and as $\cc$ is closed under intersection with regular languages we find $L_1 \star L_2 \in \cc$.

$(\subseteq)$ Suppose $w \in (L_1 \star L_2) \cup \{ \# \}$. As $\# \in L$, assume $w \in L_1 \star L_2$ and write $w \equiv u_1 u_2 \cdots u_k \# v_k \cdots v_2 v_1$ with $u_i \# v_i \in L_{X(i)}$, with $X$ a standard parametrisation. We prove $w \in L$ by induction on this $k$. If $k=0$, then $w \equiv \#$, and there is nothing to show. Suppose $k>0$. As $u_k \# v_k \in L_{X(k)}$, thus $(u_k \# v_k, \#) \in \cR_{X(k)}$, so 
\begin{equation}\label{Eq:w->u_k-1}
w \xr{\cR} u_1 u_2 \cdots u_{k-1} \# v_{k-1} \cdots v_1 v_1.
\end{equation}
By the inductive hypothesis, the right-hand side of \eqref{Eq:w->u_k-1} lies in $\langle \# \rangle_{\cR}$. Thus also $w \in \langle \# \rangle_{\cR} = L$.

$(\supseteq)$ Suppose $w \in L$. Then $w \xr{\cR}^k \#$ for some $k \geq 0$. We prove the claim by induction on $k$. If $k=0$, then $w \equiv \#$, and there is nothing to show. Suppose $k>0$ and that the claim is true for any word rewriting to $\#$ in fewer than $k$ steps. As $w \xr{\cR}^k \#$ and $\cR$ rewrites words in $A^\ast \# A^\ast$ to words in $A^\ast \# A^\ast$, there is some $w' \in A^\ast \# A^\ast$ such that $w \xr{\cR} w' \xr{\cR}^{k-1} \#$. By the inductive hypothesis, $w' \in (L_1 \star L_2) \cup \{ \# \}$. Thus we can write $w \equiv u_1 u_2 \cdots u_n \# v_n \cdots v_2 v_1$, where either all $u_i, v_i$ are empty and $n=0$, or else $u_i \# v_i \in L_{X(i)}$ for all $1 \leq i \leq n$, where $X$ is a standard parametrisation. Let $(u' \# v', \#)$ be the rule of $\cR$ by which $w$ is rewritten to $w'$. As $w'$ contains exactly one occurrence of $\#$, we thus have 
\begin{equation}\label{Eq:w_alternating_proof}
w \equiv u_1 u_2 \cdots u_n u' \# v' v_n \cdots v_2 v_1
\end{equation}
As $(u' \#v', \#) \in \cR$, either $u' \# v' \in L_{X(n)}$ or $u' \# v' \in L_{X(n+1)}$. In the former case, the right-hand side in \eqref{Eq:w_alternating_proof} is an alternating product of elements from $L_1$ and $L_2$, i.e. $w \in L_1 \star L_2$. In the latter case, as $L_{X(n)}$ is concatenation-closed we have $u_n u' \# v' v_n \in L_{X(n)}$, so the right-hand side in \eqref{Eq:w_alternating_proof} is again an alternating product of elements from $L_1$ and $L_2$, i.e. $w \in L_1 \star L_2 \subseteq (L_1 \star L_2) \cup \{ \# \}$. 
\end{proof}

As mentioned, we will use alternating product to model semigroup free products. To deal with monoid free products (and, by extension, also group free products), we need to introduce another language-theoretic tool. 

\subsection{Bipartisan ancestors}

Let $\cR_1, \cR_2 \subseteq A^\ast \times A^\ast$ be two rewriting systems. Let $L \subseteq A^\ast \# A^\ast$. Then we define the \textit{bipartisan $(\cR_1, \cR_2)$-ancestor} of $L$ as the language 
\begin{equation}\label{Eq:Bipartisan_ancestor_def}
L^{\cR_1, \cR_2} = \{ w_1 \# w_2 \mid \exists u_1 \# u_2 \in L \textnormal{ such that } w_i \in \langle u_i \rangle_{\cR_i} \textnormal{ for $i=1, 2$}\}.
\end{equation}
Intuitively, this definition tells us that we can manipulate both the left and the right (whence the name \textit{bipartisan}) sides of $\#$ in words from $L$ using $\cR_1$ resp. $\cR_2$, but that these manipulations are independent of one another. We give an example of this independence. 

\begin{example}
Let $A = \{ a, b\}$, and let $L = \{ a \# a \}$. Let $\cR_1$ be the rewriting system with the rules $(b^n, a)$ for all $n \geq 1$. Then 
\[
L^{\cR_1, \cR_1} = \{ b^{n_1} \# b^{n_2} \mid n_1, n_2 \geq 1\} \cup \{ a\#a \}.
\]
In particular, we have $b^{n_1} \# b^{n_2} \in L^{\cR_1, \cR_1}$ even if $n_1 \neq n_2$.
\end{example}

Our main goal is to prove a general preservation property of taking bipartisan ancestors (Proposition~\ref{Prop:ancestry_main_prop}). Before we can do this, we will first note a weak form of this preservation, which will be used in the subsequent proof of the general result. 

\begin{lemma}\label{Lem:Anc_partial_pres}
Let $\cc$ be a class of languages. Let $\cR_1 \subseteq A_1^\ast \times A_1^\ast$ and $\cR_2 \subseteq A_2^\ast \times A_2^\ast$ be rewriting systems over alphabets $A_1, A_2$ with $A_1 \cap A_2 = \varnothing$. If $\cR_1$ and $\cR_2$ are $\cc$-ancestry preserving, then for every $L \subseteq A_1^\ast \# A_2^\ast$, we have $L \in \cc \implies L^{\cR_1, \cR_2} \in \cc$.
\end{lemma}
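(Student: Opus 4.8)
The plan is to realise the bipartisan ancestor as an iterate of ordinary ancestors over the combined alphabet. Put $A = A_1 \cup A_2 \cup \{\#\}$ and regard $\cR_1$ and $\cR_2$ as rewriting systems over $A$ (the same sets of rules). The hypothesis $A_1 \cap A_2 = \varnothing$ supplies the key projection behaviour: if $\phi_i \colon A^\ast \to A_i^\ast$ is the homomorphism deleting every letter outside $A_i$, then a single step $x \xr{\cR_1} y$ satisfies $\phi_1(x) \xr{\cR_1} \phi_1(y)$ (a step over $A_1$) and $\phi_2(x) \equiv \phi_2(y)$, and symmetrically for $\cR_2$. Using this I would establish the identity
\[
L^{\cR_1, \cR_2} = \langle \langle L \rangle_{\cR_1} \rangle_{\cR_2} \cap (A_1^\ast \# A_2^\ast),
\]
the ancestor sets on the right being taken over $A$.

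For the inclusion $\subseteq$: given $w_1 \# w_2 \in L^{\cR_1,\cR_2}$ with a witness $u_1 \# u_2 \in L$ and derivations $w_i \xra{\cR_i} u_i$ over $A_i$, one lifts these to $A$ by carrying the inert parts along, obtaining $w_1 \# u_2 \xra{\cR_1} u_1 \# u_2 \in L$ (so $w_1 \# u_2 \in \langle L \rangle_{\cR_1}$) and then $w_1 \# w_2 \xra{\cR_2} w_1 \# u_2$; membership in $A_1^\ast \# A_2^\ast$ is automatic, since $\cR_i$-ancestors of words of $A_i^\ast$ stay in $A_i^\ast$. For the inclusion $\supseteq$: a member $w$ of the right-hand side lies in $A_1^\ast \# A_2^\ast$, say $w \equiv w_1 \# w_2$, and there is $v$ with $w \xra{\cR_2} v \xra{\cR_1} u_1 \# u_2 \in L$. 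Applying $\phi_1$ to the $\cR_2$-derivation gives $\phi_1(v) \equiv \phi_1(w) \equiv w_1$; applying $\phi_1$ to the $\cR_1$-derivation then gives $w_1 \equiv \phi_1(v) \xra{\cR_1} \phi_1(u_1 \# u_2) \equiv u_1$ over $A_1$; applying $\phi_2$ to the $\cR_1$-derivation gives $\phi_2(v) \equiv u_2$, and applying $\phi_2$ to the $\cR_2$-derivation gives $w_2 \equiv \phi_2(w) \xra{\cR_2} \phi_2(v) \equiv u_2$ over $A_2$. Hence $w_1 \# w_2 \in L^{\cR_1,\cR_2}$.

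With the identity in hand, the conclusion follows: $L \in \cc$ together with $\cR_1$ being $\cc$-ancestry preserving gives $\langle L \rangle_{\cR_1} \in \cc$ (the ancestor taken over the ambient alphabet $A$), then $\langle \langle L \rangle_{\cR_1} \rangle_{\cR_2} \in \cc$ by the same property of $\cR_2$, and intersecting with the regular language $A_1^\ast \# A_2^\ast$ keeps us in $\cc$. The delicate point is this last intersection: the iterated ancestor set really does contain ``junk'' words outside $A_1^\ast \# A_2^\ast$ --- for instance if $\cR_1$ has a length-reducing rule $\ell \to \varepsilon$, whose reverse may insert $\ell$ to the right of $\#$ --- so one must either invoke closure under intersection with regular languages (harmless here, and available in every intended application, since super-$\AFL$s are $\AFL$s) or, to keep the statement for a genuinely arbitrary class $\cc$, replace each such rule $\ell \to \varepsilon$ of $\cR_1$ by the rules $\{\,\ell a \to a : a \in A_1\,\}$ together with $\ell \# \to \#$ (and symmetrically for $\cR_2$), so that no rewriting step can ever create a word outside $A_1^\ast \# A_2^\ast$ and the identity holds with no intersection at all. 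Isolating and handling this junk is the main obstacle; the projection argument above then makes the rest routine.
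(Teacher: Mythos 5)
Your argument is correct, and at bottom it takes the same route as the paper: realise the bipartisan ancestor via iterated ordinary ancestor sets and apply ancestry preservation twice. The difference is your final intersection with $A_1^\ast \# A_2^\ast$, and this is not optional caution --- it repairs a genuine oversight in the paper's own proof. The paper claims the identity $L^{\cR_1,\cR_2} = \langle\langle L\rangle_{\cR_1}\rangle_{\cR_2}$ with no intersection, and in the inclusion $\supseteq$ asserts that the rewriting $v_1\# v_2 \xra{\cR_1} u_1\# u_2$ forces $v_2\equiv u_2$; this fails exactly in the situation you flag, namely when $\cR_1$ has a rule with empty right-hand side, so that $\langle L\rangle_{\cR_1}$ contains ``junk'' with $A_1$-letters to the right of $\#$. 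A minimal instance: $A_1=\{a\}$, $A_2=\{b\}$, $\cR_1=\{(a,\varepsilon)\}$, $\cR_2=\varnothing$, $L=\{\#\}$ gives $\langle\langle L\rangle_{\cR_1}\rangle_{\cR_2}=\{a^m\# a^n \mid m,n\ge 0\}$, whereas $L^{\cR_1,\cR_2}=\{a^m\# \mid m \ge 0\}$. Since the systems to which the lemma is eventually applied (Lemma~\ref{Lem:WP_Monoids_is_bipartisan}) consist entirely of rules of the form $(w,1)$, this is precisely the relevant case, so your corrected identity, established by the projection homomorphisms $\phi_1,\phi_2$ (that argument is sound), is the statement one actually needs.

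The cost, as you note, is closure of $\cc$ under intersection with regular languages, which is absent from the lemma's literal hypotheses; some such assumption genuinely must be added, since in the example above one may take $\cc$ to be the class of all languages except $a^\ast\#$: then $\cR_1,\cR_2$ are $\cc$-ancestry preserving and $L\in\cc$, yet $L^{\cR_1,\cR_2}\notin\cc$. Your first repair is therefore the right one, and it is harmless in every application: Proposition~\ref{Prop:ancestry_main_prop} assumes closure under rational transductions, and super-$\AFL$s are $\AFL$s. Your alternative repair (replacing an erasing rule $(\ell,\varepsilon)$ by $(\ell a, a)$ for $a\in A_1$ and $(\ell\#,\#)$) does confine all ancestors to $A_1^\ast\# A_2^\ast$, but it does not rescue the statement for a genuinely arbitrary $\cc$ either: the hypothesis says that $\cR_1$, not the modified system, is $\cc$-ancestry preserving, so you would still need extra closure properties (in the monadic super-$\AFL$ setting it is fine, the new left-hand-side languages being concatenations of $\cc$-languages with letters). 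In short: same approach as the paper, executed more carefully; just state explicitly the regular-intersection closure you invoke at the end.
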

\begin{proof}
We claim $L^{\cR_1, \cR_2} = \langle \langle L \rangle_{\cR_1} \rangle_{\cR_2}$. As $\cR_1, \cR_2$ are both $\cc$-ancestry preserving, this is sufficient to prove the lemma. The inclusion $\subseteq$ is immediate by \eqref{Eq:Bipartisan_ancestor_def}. For $\supseteq$, suppose $w \in \langle \langle L \rangle_{\cR_1} \rangle_{\cR_2}$. Then $w \equiv w_1 \# w_2$, and there is some $v_1 \# v_2 \in \langle L \rangle_{\cR_1}$ with $w_1 \# w_2 \xra{\cR_2} v_1 \# v_2$. Furthermore, there is some $u_1 \# u_2 \in  L$ with $v_1 \# v_2 \xra{\cR_1} u_1 \# u_2$. As $u_1 \# u_2 \in L$, we have $u_i \in A_i^\ast$. Thus, as $\cR_1 \subseteq A_1^\ast \times A_1^\ast$ and $A_1 \cap A_2 = \varnothing$, it follows that the rewriting $v_1 \# v_2 \xra{\cR_1} u_1 \# u_2$ effects a rewriting $v_1 \xra{\cR_1} u_1$, and that $v_2 \equiv u_2$. By an analogous argument, the rewriting $w_1 \# w_2 \xra{\cR_2} v_1 \# v_2 \equiv v_1 \# u_2$ effects a rewriting $w_2 \xra{\cR_2} u_2$. Hence $w \equiv w_1 \# w_2$ is such that there is a $u_1 \# u_2 \in L$ with $w_i \xra{\cR_i} u_i$ for $i=1,2$. We conclude from \eqref{Eq:Bipartisan_ancestor_def} that $w \in L^{\cR_1, \cR_2}$. Of course, by symmetry, also $L^{\cR_1, \cR_2} = \langle \langle L \rangle_{\cR_2} \rangle_{\cR_1}$.
\end{proof}
\begin{remark}
In the setting of Lemma~\ref{Lem:Anc_partial_pres}, we have $L^{\cR_1, \cR_2} = \langle \langle L \rangle_{\cR_1} \rangle_{\cR_2}$. However, $L^{\cR_2, \cR_1} = L$, as no rewritings can now be performed in \eqref{Eq:Bipartisan_ancestor_def}. In particular, we can certainly have $L^{\cR_1, \cR_2} \neq L^{\cR_2, \cR_1}$.
\end{remark}

To generalise the above situation to not necessarily disjoint alphabets, let $A$ be an alphabet, and $A_\ell, A_r$ be two alphabets in bijective correspondence with $A$, such that $A_\ell, A_r, A$ are pairwise disjoint. Fix two bijections $\varphi_\ell \colon A \to A_\ell$ resp. $\varphi_r \colon A \to A_r$, and extend them to isomorphisms $\varphi_\ell$ resp. $\varphi_r$ of the corresponding free monoids $A^\ast$ and $A_\ell^\ast$ resp. $A_r^\ast$. If $\cR \subseteq A^\ast \times A^\ast$ is a rewriting system, then we let $\cR_\ell$ denote the system with rules $(\varphi_\ell(r), \varphi_\ell(s))$ whenever $(r, s) \in \cR$. We define $\cR_r$ analogously. It is readily seen that if $\cc$ is a class of languages closed under homomorphism, then $\cR$ is $\cc$-ancestry preserving if and only if $\cR_\ell$ (or indeed $\cR_r$) is $\cc$-ancestry preserving. 

We define a rational transduction. For ease of notation, let $A_\# = A \cup \{ \# \}$, and let $A_{\ell,r,\#} = A_\# \cup A_\ell \cup A_r$. Let $\mu_{\ell,r} \subseteq A_\#^\ast \times A_{\ell,r,\#}^\ast$ be defined by
\begin{equation}\label{Eq:Transduction_ell_to_r}
\mu_{\ell,r} = \bigg( \bigcup_{a \in A} (a, \varphi_\ell(a)) \bigg)^\ast (\#,\#) \bigg( \bigcup_{a \in A} (a, \varphi_r(a)) \bigg)^\ast.
\end{equation}
As $\mu_{\ell,r} = X^\ast x Y^\ast$ for finite subsets $X, Y \subseteq A_\#^\ast \times A_{\ell,r,\#}^\ast$ and a single element $x \in A_\#^\ast \times A_{\ell,r,\#}^\ast$, it follows that $\mu_{\ell,r}$ is a rational subset of $A_\#^\ast \times A_{\ell,r,\#}^\ast$, so it is a rational transduction. Furthermore, it is clear that if $L \subseteq A^\ast \# A^\ast$, then 
\begin{equation}\label{Eq:Mu_on_L}
\mu_{\ell,r}(L) = \{ \varphi_\ell(u) \# \varphi_r(v) \mid u \# v \in L\}.
\end{equation}
In particular, $\mu_{\ell,r}$ is bijective on subsets of $A^\ast \# A^\ast$. Analogously, $\mu_{\ell,r}^{-1}$ is bijective on subsets of $A_\ell^\ast \# A_r^\ast$ (here, by $\mu_{\ell,r}^{-1}$ we mean the usual inverse of a transduction). We remark that the notation $\mu_{\ell,r}$ is slightly abusive, as it suppresses any reference to the set $A$ or the symbol $\#$, or to the fixed bijections $\varphi_\ell, \varphi_r$. Context, however, will always make the former clear, and the choices of the latter make no difference, as we will not use any particular properties of our fixed bijections. 

\begin{proposition}\label{Prop:ancestry_main_prop}
Let $\cc$ be a class of languages closed under rational transductions. Let $L \subseteq A^\ast \# A^\ast$, and let $\cR_1, \cR_2 \subseteq A^\ast \times A^\ast$ be rewriting systems. If $\cR_1, \cR_2$ are $\cc$-ancestry preserving, then $L \in \cc \implies L^{\cR_1, \cR_2} \in \cc$.
\end{proposition}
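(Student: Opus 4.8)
The idea is to reduce the general case (where the rewriting systems $\cR_1, \cR_2$ act on the same alphabet $A$) to the disjoint-alphabet situation already handled in Lemma~\ref{Lem:Anc_partial_pres}, using the rational transduction $\mu_{\ell,r}$ to ``separate'' the left and right copies of $A$. Concretely, given $L \subseteq A^\ast \# A^\ast$ in $\cc$, first form $\mu_{\ell,r}(L) \subseteq A_\ell^\ast \# A_r^\ast$; since $\cc$ is closed under rational transductions, this language is in $\cc$. Now $A_\ell$ and $A_r$ are disjoint alphabets, and $(\cR_1)_\ell \subseteq A_\ell^\ast \times A_\ell^\ast$, $(\cR_2)_r \subseteq A_r^\ast \times A_r^\ast$ are rewriting systems over these disjoint alphabets. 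As noted in the text preceding the proposition, $\cR_i$ being $\cc$-ancestry preserving is equivalent to $(\cR_1)_\ell$ (resp.\ $(\cR_2)_r$) being $\cc$-ancestry preserving, because $\cc$ is closed under homomorphism (indeed, closure under rational transductions gives this). Hence Lemma~\ref{Lem:Anc_partial_pres} applies to $\mu_{\ell,r}(L)$ with these two systems, yielding $\big(\mu_{\ell,r}(L)\big)^{(\cR_1)_\ell, (\cR_2)_r} \in \cc$.

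The next step is to recover $L^{\cR_1, \cR_2}$ from $\big(\mu_{\ell,r}(L)\big)^{(\cR_1)_\ell, (\cR_2)_r}$ by applying $\mu_{\ell,r}^{-1}$. The key identity to verify is
\[
L^{\cR_1, \cR_2} = \mu_{\ell,r}^{-1}\Big( \big(\mu_{\ell,r}(L)\big)^{(\cR_1)_\ell, (\cR_2)_r} \Big).
\]
This is essentially a bookkeeping check: by \eqref{Eq:Mu_on_L}, $\mu_{\ell,r}(L) = \{ \varphi_\ell(u) \# \varphi_r(v) \mid u \# v \in L\}$; by definition of the bipartisan ancestor and the fact that $\varphi_\ell, \varphi_r$ are isomorphisms carrying $\cR_i$ to $(\cR_i)_\ell$ (resp.\ $(\cR_i)_r$), we have $w_i \in \langle u_i \rangle_{\cR_i}$ if and only if $\varphi_\ell(w_1) \in \langle \varphi_\ell(u_1) \rangle_{(\cR_1)_\ell}$ (resp.\ $\varphi_r(w_2) \in \langle \varphi_r(u_2)\rangle_{(\cR_2)_r}$); and $\mu_{\ell,r}^{-1}$ is bijective on subsets of $A_\ell^\ast \# A_r^\ast$, sending $\varphi_\ell(w_1) \# \varphi_r(w_2)$ back to $w_1 \# w_2$. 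Chasing these equivalences through \eqref{Eq:Bipartisan_ancestor_def} gives the displayed identity. Since $\cc$ is closed under rational transductions and $\mu_{\ell,r}^{-1}$ is a rational transduction, we conclude $L^{\cR_1, \cR_2} \in \cc$, as required.

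The main obstacle, such as it is, is purely notational rather than mathematical: one must be careful that the word $w \in L^{\cR_1, \cR_2}$ genuinely lies in $A^\ast \# A^\ast$ (so that $\mu_{\ell,r}$ is defined and bijective on it), and that the ancestor relations $\langle u_i \rangle_{\cR_i}$ are not accidentally enlarged when passing to the $\ell$- and $r$-copies — this is exactly why the bijective isomorphisms $\varphi_\ell, \varphi_r$, rather than mere homomorphisms, are used, and why Lemma~\ref{Lem:Anc_partial_pres} requires the alphabets to be disjoint. No nontrivial language-theoretic work beyond Lemma~\ref{Lem:Anc_partial_pres} and the stated closure of $\cc$ is needed.
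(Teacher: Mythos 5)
Your proposal is correct and follows essentially the same route as the paper: transport $L$ to the disjoint alphabets $A_\ell, A_r$ via the rational transduction $\mu_{\ell,r}$, apply Lemma~\ref{Lem:Anc_partial_pres} to $(\cR_1)_\ell, (\cR_2)_r$, pull back with $\mu_{\ell,r}^{-1}$, and verify the identity $L^{\cR_1,\cR_2} = \mu_{\ell,r}^{-1}\big((\mu_{\ell,r}(L))^{(\cR_1)_\ell,(\cR_2)_r}\big)$ by unwinding \eqref{Eq:Bipartisan_ancestor_def}. The ``bookkeeping check'' you sketch is exactly the chain of equivalences the paper spells out, so no gap remains.
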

\begin{proof}
Let $L_0 = \mu_{\ell,r}^{-1}\big( \left( \mu_{\ell,r}(L)\right)^{\cR_{1,\ell}, \cR_{2,r}} \big)$. We claim that $L^{\cR_1, \cR_2} = L_0$. This would imply the proposition by the following reasoning: first $\mu_{\ell,r}(L) \subseteq A_\ell^\ast \# A_r^\ast$ is in $\cc$ by closure of $\cc$ under rational transduction. Second, $\cR_{1,\ell}$ resp. $\cR_{2,r}$ is clearly $\cc$-ancestry preserving, as $\cR_1$ resp. $\cR_2$ is. As $A_\ell \cap A_r = \varnothing$, it follows from Lemma~\ref{Lem:Anc_partial_pres} that $\left( \mu_{\ell,r}(L)\right)^{\cR_{1,\ell}, \cR_{2,r}} \in \cc$. Finally, by closure under rational transduction (and as the inverse of a rational transduction is rational), applying $\mu_{\ell,r}^{-1}$ yields the result.

We prove the claim. First, $w \equiv w_1 \# w_2 \in L^{\cR_1, \cR_2}$ if and only if: (i) there exists $u_1 \# u_2 \in L$ with $w_i \xra{\cR_i} u_i$, with $u_i \in A^\ast$, for $i=1,2$. By \eqref{Eq:Mu_on_L}, it follows that (i) is equivalent to: (ii) there exists $v \in \mu_{\ell,r}(L)$ such that $v \equiv \varphi_\ell(u_1) \# \varphi_r(u_2)$ and $w_1 \xra{\cR_1} u_1$ and $w_2 \xra{\cR_2} u_2$. Note that for any $x, y \in A^\ast$, we have $x \xra{\cR_1} y$ if and only if $\varphi_\ell(x) \xra{\cR_{1,\ell}} \varphi_\ell(y)$, and analogously for $\cR_2, \varphi_r$ and $\cR_{2,r}$. Hence using $u_1 \equiv \varphi_\ell^{-1}(\varphi_\ell(u_1))$ and $u_2 \equiv \varphi_r^{-1}(\varphi_r(u_2))$, it follows that (ii) is equivalent to: (iii) there exists $v \in \mu_{\ell,r}(L)$ such that $v \equiv \varphi_\ell(u_1) \# \varphi_r(u_2)$ and $\varphi_\ell(w_1) \xra{\cR_{1,\ell}} \varphi_\ell(u_1)$ and $\varphi_r(w_2) \xra{\cR_{2,r}} \varphi_r(u_2)$. But now, by \eqref{Eq:Bipartisan_ancestor_def}, (iii) is simply equivalent to: (iv) $\varphi_\ell(w_1) \# \varphi_r(w_2) \in \left( \mu_{\ell,r}(L) \right)^{\cR_{1,\ell}, \cR_{2,r}}$. As $\varphi_{\ell}(w_1) \# \varphi_r(w_2) \equiv \mu_{\ell,r}(w)$, it follows by the bijectivity of $\mu_{\ell,r}^{-1}$ on subsets of $A_\ell^\ast \# A_r^\ast$ that (iv) is finally equivalent to: (v) $w \in L_0$. This is what was to be shown. 
\end{proof}

We have the following corollary, being easier to state than Proposition~\ref{Prop:ancestry_main_prop}.

\begin{proposition}\label{Prop:LinC=>L(R1,R2)inC,monadic_prop}
Let $\cc$ be a super-$\AFL$. Let $L \subseteq A^\ast \# A^\ast$, and let $\cR_1, \cR_2$ be monadic $\cc$-rewriting systems. Then $L \in \cc \implies L^{\cR_1, \cR_2} \in \cc$.
\end{proposition}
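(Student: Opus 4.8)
This is essentially an immediate specialisation of Proposition~\ref{Prop:ancestry_main_prop}, so the plan is simply to check that its hypotheses are met. The only two things to verify are: (a) a super-$\AFL$ is closed under rational transductions, and (b) a monadic $\cc$-rewriting system is $\cc$-ancestry preserving.

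\textbf{Key steps.} First I would recall that $\cc$, being a super-$\AFL$, is by Definition~\ref{Def:super-AFL} in particular an $\AFL$; since every $\AFL$ is closed under homomorphism, inverse homomorphism, and intersection with regular languages, it is closed under rational transductions (a rational transduction being expressible as a composition of these operations via the standard Nivat decomposition), and this also handles the fact that the inverse of a rational transduction is rational, as used in the proof of Proposition~\ref{Prop:ancestry_main_prop}. Second, I would invoke the defining clause (1) of Definition~\ref{Def:super-AFL}: $\cc$ has the monadic ancestor property, which says precisely that every monadic $\cc$-rewriting system is $\cc$-ancestry preserving. Hence both $\cR_1$ and $\cR_2$ are $\cc$-ancestry preserving. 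Having established (a) and (b), I would conclude by applying Proposition~\ref{Prop:ancestry_main_prop} directly to $L$, $\cR_1$, $\cR_2$: since $L \in \cc$, it gives $L^{\cR_1, \cR_2} \in \cc$, as required.

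\textbf{Main obstacle.} There is essentially no obstacle here; the statement is a convenience corollary packaging the hypotheses of the more general Proposition~\ref{Prop:ancestry_main_prop} into the single, cleanly-checkable condition ``$\cc$ is a super-$\AFL$''. The only point deserving a sentence of care is that closure under rational transductions for an $\AFL$ is a standard fact (it is why $\AFL$s are sometimes called ``full trios closed under the extra operations''), and one might cite a standard reference such as \cite{Berstel1985} rather than reprove it.

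\begin{proof}
Since $\cc$ is a super-$\AFL$, it is an $\AFL$, and hence closed under rational transductions (see e.g.\ \cite{Berstel1985}); moreover, by Definition~\ref{Def:super-AFL}(1) it has the monadic ancestor property, so every monadic $\cc$-rewriting system is $\cc$-ancestry preserving. In particular $\cR_1$ and $\cR_2$ are $\cc$-ancestry preserving. The claim now follows immediately from Proposition~\ref{Prop:ancestry_main_prop}.
\end{proof}
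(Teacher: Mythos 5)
Your proposal is correct and matches the paper, which states this proposition as an immediate corollary of Proposition~\ref{Prop:ancestry_main_prop} without further proof: the monadic ancestor property of the super-$\AFL$ $\cc$ gives that $\cR_1,\cR_2$ are $\cc$-ancestry preserving, and closure under rational transductions follows from $\cc$ being an $\AFL$ (a fact the paper itself invokes in the proof of Proposition~\ref{Prop:MAP<=>NIS}). Your two verification points (a) and (b) are exactly the intended justification.
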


We are now ready to apply the theory of bilateral ancestors to free products; we shall see that (the word problem for) the monoid free product is obtained as a bilateral ancestor of the semigroup free product, which in turn is obtained as an alternating free product of the word problems of the free factors.

\section{Free products and word problems}\label{Sec:FPandWP}

\noindent In this section, we will describe the language-theoretic  properties of free products of semigroups and monoids, using the tools developed in \S\ref{Sec:Toolbox}. The overall, informal idea is the following: for both semigroup and monoid free products, the alternating product of the word problems of the free factors forms the first layer. For semigroup free products, this is in fact a complete description. For monoid free products, on the other hand, we also need to use bipartisan ancestors to account for the fact that not all words are reduced. 

\subsection{Semigroup free products}\label{Subsec:WP_Semigroup}

All the legwork for describing semigroup free products is performed by alternating products. 

\begin{lemma}\label{Lem:WP_sfp_is_AP}
Let $S_1, S_2$ be two semigroups, generated by disjoint sets $A_1, A_2$, respectively. Let $S_1 \ast S_2$ denote their semigroup free product. Then 
\[
\WP_{A_1 \cup A_2}^{S_1 \ast S_2} = \WP_{A_1}^{S_1} \star \WP_{A_2}^{S_2}.
\]
\end{lemma}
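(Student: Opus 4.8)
The statement is a pure normal-form bookkeeping identity: $\WP_{A_1 \cup A_2}^{S_1 \ast S_2} = \WP_{A_1}^{S_1} \star \WP_{A_2}^{S_2}$. Both sides are subsets of $(A_1 \cup A_2)^\ast \# (A_1 \cup A_2)^\ast$ (really $(A_1\cup A_2)^+ \# (A_1 \cup A_2)^+$, with the empty-word subtleties handled as in the semigroup convention), so I would prove the two inclusions separately, in each case unwinding the definition of $\WP$ and invoking Lemma~\ref{Lem:Semigroup_free_product_normalforms}.

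**$(\subseteq)$.** Take $u \# w \in \WP_{A_1 \cup A_2}^{S_1 \ast S_2}$; by definition $w \equiv v^\trev$ for some $v$ with $u =_{S_1 \ast S_2} v$. Apply Lemma~\ref{Lem:Semigroup_free_product_normalforms} to the pair $(u,v)$: there are factorisations $u \equiv u_1 u_2 \cdots u_k$ and $v \equiv v_1 v_2 \cdots v_k$ with $u_i, v_i \in A_{X(i)}^+$ and $u_i =_{S_{X(i)}} v_i$ for each $i$, for a standard parametrisation $X$. From $u_i =_{S_{X(i)}} v_i$ we get $u_i \# v_i^\trev \in \WP_{A_{X(i)}}^{S_{X(i)}}$. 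Now $v^\trev \equiv (v_1 \cdots v_k)^\trev \equiv v_k^\trev \cdots v_2^\trev v_1^\trev$, since $\trev$ is an anti-homomorphism. Writing $\tilde v_i := v_i^\trev$, we have exhibited $u \# v^\trev \equiv u_1 \cdots u_k \# \tilde v_k \cdots \tilde v_1$ with $u_i \# \tilde v_i \in \WP_{A_{X(i)}}^{S_{X(i)}}$ for each $i$, which is exactly the defining form \eqref{Eq:Def_alternating} of $\WP_{A_1}^{S_1} \star \WP_{A_2}^{S_2}$.

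**$(\supseteq)$.** Conversely, take a word of the form $u_1 \cdots u_k \# \tilde v_k \cdots \tilde v_1$ with $u_i \# \tilde v_i \in \WP_{A_{X(i)}}^{S_{X(i)}}$; by definition of the factor word problems, $\tilde v_i \equiv v_i^\trev$ for some $v_i \in A_{X(i)}^+$ with $u_i =_{S_{X(i)}} v_i$. Set $u \equiv u_1 \cdots u_k$ and $v \equiv v_1 \cdots v_k$. The displayed word is then $u \# (v_1 \cdots v_k)^\trev \equiv u \# v^\trev$, again because $\trev$ is an anti-homomorphism. It remains to check $u =_{S_1 \ast S_2} v$, which is the "if" direction of Lemma~\ref{Lem:Semigroup_free_product_normalforms}: the factorisations $u \equiv u_1 \cdots u_k$, $v \equiv v_1 \cdots v_k$ satisfy conditions (1) and (2) of that lemma (note each $u_i \# \tilde v_i \in \WP_{A_{X(i)}}^{S_{X(i)}}$ forces $u_i, v_i \in A_{X(i)}^+$ nonempty, so the alternation is genuine and $X$ is a valid standard parametrisation for the product). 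Hence $u \# v^\trev \in \WP_{A_1 \cup A_2}^{S_1 \ast S_2}$.

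**Main obstacle.** There is no real mathematical difficulty here — Lemma~\ref{Lem:Semigroup_free_product_normalforms} does all the structural work and the reversal identity $(xy)^\trev \equiv y^\trev x^\trev$ does the rest. The only thing to be careful about is the interface between the two conventions: the alternating product $\star$ is defined on languages in $A^\ast \# A^\ast$ with a standard parametrisation allowing a "short" first or last block, whereas the semigroup word problem lives over $A^+$ and Lemma~\ref{Lem:Semigroup_free_product_normalforms} produces blocks that are all nonempty. I would note explicitly that, since every $u_i \# \tilde v_i \in \WP_{A_{X(i)}}^{S_{X(i)}}$ has $u_i, v_i$ nonempty, the uniqueness clause of Lemma~\ref{Lem:Semigroup_free_product_normalforms} matches the block structure on the $\star$-side on the nose, so the parametrisations $X$ appearing in the two definitions may be taken to be the same. (The degenerate case $k=0$, i.e.\ the bare symbol $\#$, occurs on neither side for semigroups, so no special-casing is needed, in contrast to the monoid situation.)
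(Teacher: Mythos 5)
Your proposal is correct and follows essentially the same route as the paper: unwind the definition of $\WP_{A_1\cup A_2}^{S_1\ast S_2}$, apply Lemma~\ref{Lem:Semigroup_free_product_normalforms}, use that $^\trev$ is an anti-homomorphism, and match the result against the definition \eqref{Eq:Def_alternating} of the alternating product. The only cosmetic difference is that the paper writes this as a single chain of equivalences rather than two separate inclusions, and your remark about the nonempty blocks and the degenerate case $k=0$ is a harmless elaboration of what the paper leaves implicit.
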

\begin{proof}
First, as $\WP_{A_1}^{S_1}$ and $\WP_{A_2}^{S_2}$ are concatenation-closed languages, their alternating product $\WP_{A_1}^{S_1} \star \WP_{A_2}^{S_2}$ is well-defined. An arbitrary word $w$ is in $\WP_{A_1 \cup A_2}^{S_1 \ast S_2}$ if and only if $w \equiv u \# v^\trev$ with $u, v  \in (A_1 \cup A_2)^+$ and $u =_S v$. By Lemma~\ref{Lem:Semigroup_free_product_normalforms}, there exist unique factorisations 
\begin{equation*}
u \equiv u_0 u_1 \cdots u_n \quad \textnormal{and} \quad v \equiv v_0 v_1 \cdots v_n,
\end{equation*}
such that $u_i, v_i \in A_{X(i)}^+$ and $u_i =_{S_{X(i)}} v_i$ for all $0 \leq i \leq n$, where $X$ is a standard parametrisation. But this is equivalent to 
\begin{equation}\label{Eq:SGP_FP_AP}
w \equiv u_1 u_2 \cdots u_n \# (v_1 v_2 \cdots v_n)^\trev \equiv u_1 u_2 \cdots u_n \# v_n^\trev \cdots v_2^\trev v_1^\trev,
\end{equation}
such that $u_i \# v_i^\trev \in \WP_{A_{X(i)}}^{S_{X(i)}}$ for $0 \leq i \leq n$. By the definition \eqref{Eq:Def_alternating} of the alternating product, \eqref{Eq:SGP_FP_AP} is equivalent to $w \in  \WP_{A_1}^{S_1} \star \WP_{A_2}^{S_2}$, as desired. 
\end{proof}

We conclude the following by combining Proposition~\ref{Prop:L1,L2=>L1starL2_in_C} and Lemma~\ref{Lem:WP_sfp_is_AP}:

\begin{theorem}\label{Theo:SGP_closed_under_FP}
Let $\cc$ be a super-$\AFL$. Then the class of semigroups with word problem in $\cc$ is closed under taking semigroup free products. 
\end{theorem}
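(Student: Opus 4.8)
The plan is to obtain the theorem as an immediate consequence of Lemma~\ref{Lem:WP_sfp_is_AP} and Proposition~\ref{Prop:L1,L2=>L1starL2_in_C}. Let $S_1$ and $S_2$ be semigroups with word problem in $\cc$. Since $\cc$ is an $\AFL$, hence closed under inverse homomorphism, the property of having word problem in $\cc$ does not depend on the chosen finite generating set; I would therefore fix finite generating sets $A_1$ for $S_1$ and $A_2$ for $S_2$ with $A_1 \cap A_2 = \varnothing$. Then $\WP_{A_1}^{S_1} \in \cc$ and $\WP_{A_2}^{S_2} \in \cc$, and, regarded as subsets of $(A_1 \cup A_2)^\ast \# (A_1 \cup A_2)^\ast$, both are concatenation-closed (this is exactly the content of the example following the definition of concatenation-closure, applied to $S_1$ and $S_2$).

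Next I would apply Lemma~\ref{Lem:WP_sfp_is_AP} to the semigroup free product $S_1 \ast S_2$ over the generating set $A_1 \cup A_2$, giving
\[
\WP_{A_1 \cup A_2}^{S_1 \ast S_2} = \WP_{A_1}^{S_1} \star \WP_{A_2}^{S_2}.
\]
Since $\cc$ is a super-$\AFL$ and $\WP_{A_1}^{S_1}, \WP_{A_2}^{S_2}$ are concatenation-closed languages lying in $\cc$, Proposition~\ref{Prop:L1,L2=>L1starL2_in_C} yields $\WP_{A_1}^{S_1} \star \WP_{A_2}^{S_2} \in \cc$. Combining the two facts, $\WP_{A_1 \cup A_2}^{S_1 \ast S_2} \in \cc$, i.e.\ $S_1 \ast S_2$ has word problem in $\cc$, as required.

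There is essentially no obstacle beyond the bookkeeping already carried out in \S\ref{Sec:Toolbox}: the substantive steps are the structural description of $\WP^{S_1 \ast S_2}$ as an alternating product and the preservation of membership in $\cc$ under alternating products, both of which are available as cited. The only points meriting a word of care are that membership in $\cc$ is generating-set independent (which rests on $\cc$ being an $\AFL$) and that the alternating product is well-defined, which requires the concatenation-closedness of word-problem languages already established. In particular, no appeal to closure of $\cc$ under reversal is needed for the semigroup case; that hypothesis enters only when one passes to monoid free products.
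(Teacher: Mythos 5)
Your proposal is correct and follows exactly the paper's own argument, which obtains the theorem immediately by combining Lemma~\ref{Lem:WP_sfp_is_AP} with Proposition~\ref{Prop:L1,L2=>L1starL2_in_C}. The additional remarks on generating-set independence and on reversal not being needed in the semigroup case are accurate but not required.
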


As a very particular case, taking $\cc = \CF$, the class of context-free languages, we recover the following recent result.

\begin{corollary}[Brough, Cain \& Pfeiffer, 2019]\label{Cor:SGP_CF_closed_under_FP}
The class of semigroups with context-free word problem is closed under taking semigroup free products.
\end{corollary}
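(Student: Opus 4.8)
The plan is to derive Corollary~\ref{Cor:SGP_CF_closed_under_FP} as a direct specialisation of Theorem~\ref{Theo:SGP_closed_under_FP}. The only thing to verify is that the hypothesis of the theorem applies, i.e. that the class $\CF$ of context-free languages is a super-$\AFL$. First I would recall that $\CF$ is an $\AFL$ (this is classical: the context-free languages are closed under homomorphism, inverse homomorphism, intersection with regular sets, union, concatenation, and Kleene star). Then I would invoke the fact, already recorded in the excerpt's list of examples, that $\CF$ has the monadic ancestor property -- equivalently, is closed under nested iterated substitution -- with the cited references (Kr\'al, and Book--Jantzen--Wrathall). Hence $\CF$ is a super-$\AFL$ in the sense of Definition~\ref{Def:super-AFL}.

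Once that is in place, the corollary is immediate: apply Theorem~\ref{Theo:SGP_closed_under_FP} with $\cc = \CF$. If $S_1$ and $S_2$ are semigroups with context-free word problem, i.e. $\WP_{A_1}^{S_1}, \WP_{A_2}^{S_2} \in \CF$ for some (equivalently, any, since $\CF$ is closed under inverse homomorphism) finite generating sets, then $\WP_{A_1 \cup A_2}^{S_1 \ast S_2} \in \CF$, so $S_1 \ast S_2$ has context-free word problem.

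There is essentially no obstacle here: the entire content has been pushed into Theorem~\ref{Theo:SGP_closed_under_FP} (which itself rests on Proposition~\ref{Prop:L1,L2=>L1starL2_in_C} and Lemma~\ref{Lem:WP_sfp_is_AP}) and into the already-cited fact that $\CF$ is a super-$\AFL$. The only point worth a sentence of care is making explicit that ``context-free word problem'' in the sense of Brough--Cain--Pfeiffer coincides with ``word problem in $\CF$'' as used here -- but this is exactly the generating-set independence guaranteed by $\CF$ being closed under inverse homomorphism, already noted in the Background section. So the proof is a two-line deduction, and I would write it as such.

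\begin{proof}
The class $\CF$ of context-free languages is an $\AFL$, and it has the monadic ancestor property (equivalently, it is closed under nested iterated substitution) by \cite[Theorem~1.2]{Kral1970} and \cite[Theorem~2.2]{Book1982b}. Hence $\CF$ is a super-$\AFL$ in the sense of Definition~\ref{Def:super-AFL}, and the result follows immediately from Theorem~\ref{Theo:SGP_closed_under_FP} upon taking $\cc = \CF$.
\end{proof}
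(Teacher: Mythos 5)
Your proposal is correct and matches the paper exactly: the corollary is obtained by specialising Theorem~\ref{Theo:SGP_closed_under_FP} to $\cc = \CF$, using the already-recorded fact that $\CF$ is a super-$\AFL$. Nothing further is needed.
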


We note that the method (pushdown automata) used by the authors of \cite{Brough2019} to prove Corollary~\ref{Cor:SGP_CF_closed_under_FP} is very different from the method used in this paper.  Given that the class $\IND$ of indexed languages is a super-$\AFL$, we find:

\begin{corollary}\label{Cor:SGP_IND_closed_udner_FP}
The class of semigroups with indexed word problem is closed under taking semigroup free products.
\end{corollary}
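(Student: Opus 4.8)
The plan is to derive this statement immediately from Theorem~\ref{Theo:SGP_closed_under_FP} by specialising the class $\cc$ to the class $\IND$ of indexed languages, in exactly the same way that Corollary~\ref{Cor:SGP_CF_closed_under_FP} was obtained by taking $\cc = \CF$. The first thing I would observe is that the only hypothesis on $\cc$ in Theorem~\ref{Theo:SGP_closed_under_FP} is that $\cc$ be a super-$\AFL$; in contrast to the monoid free product treated later, no closure under reversal is required in the semigroup case. Hence the entire task reduces to verifying that $\IND$ is a super-$\AFL$ in the sense of Definition~\ref{Def:super-AFL}.

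For that verification I would invoke the two standard facts that $\IND$ is an $\AFL$ and that it is closed under nested iterated substitution, i.e.\ that it satisfies Definition~\ref{Def:super-AFL}(2); these are precisely the results referenced in the Example following Definition~\ref{Def:super-AFL} (see \cite{Aho1968, Engelfriet1985}). By Proposition~\ref{Prop:MAP<=>NIS}, closure under nested iterated substitution is equivalent, for an $\AFL$, to the monadic ancestor property of Definition~\ref{Def:super-AFL}(1), so $\IND$ is indeed a super-$\AFL$.

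With the hypothesis confirmed, applying Theorem~\ref{Theo:SGP_closed_under_FP} with $\cc = \IND$ yields exactly that the class of semigroups $S$ with $\WP_A^S \in \IND$ is closed under semigroup free products; and since, as noted in \S\ref{Sec:Background}, $\IND$ is closed under inverse homomorphism, the property of having indexed word problem does not depend on the chosen finite generating set, so this is precisely the assertion of the corollary. I do not anticipate any genuine obstacle here: all the mathematical content is already packaged into Theorem~\ref{Theo:SGP_closed_under_FP} and into the known fact that the indexed languages form a super-$\AFL$. The only minor point to be careful about is to confirm that no closure-under-reversal condition is being smuggled in, which it is not in the semigroup setting.
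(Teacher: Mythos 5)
Your proposal is correct and is essentially the paper's own argument: the corollary is obtained by specialising Theorem~\ref{Theo:SGP_closed_under_FP} to $\cc = \IND$, using the fact (recorded in the example after Definition~\ref{Def:super-AFL}, via \cite{Aho1968, Engelfriet1985}) that the indexed languages form a super-$\AFL$. Your additional remarks -- that no closure under reversal is needed in the semigroup case and that closure under inverse homomorphism makes the notion generating-set independent -- are accurate and consistent with the paper.
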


We now turn to prove the analogous statements for monoid free products.

\subsection{Monoid free products}

The identification of the identity elements of the factors in a monoid free product means that the word problem of the product will not, in general, be the alternating product of the factors' word problems. 

\begin{example}
Let $M_1 = \pres{Mon}{a_1}{a_1^2=1}$ and $M_2 = \pres{Mon}{a_2}{a_2^2=1}$, and let $A_1 = \{ a_1 \}$, $A_2 = \{ a_2 \}$, $A = A_1 \cup A_2$. Let $M = M_1 \ast M_2$ denote their monoid free product, isomorphic to the infinite dihedral group $D_\infty$. Then $a_1 a_2^2 a_1 =_{M} 1$, but the element $a_1 a_2^2 a_1 \# \varepsilon$ is not an element of the alternating product $\WP_{A_1}^{M_1} \star \WP_{A_2}^{M_2}$, even though $a_1 a_2^2 a_1 \#\varepsilon \in \WP_A^M$.
\end{example}

Recall the definition \eqref{Eq:IP_Defn} of $\IP_A^M$. The following is more or less obvious.

\begin{lemma}\label{Lem:WP=>IP}
Let $\cc$ be a class of languages closed under quotient with regular languages. Let $M$ be a monoid, finitely generated by $A$. If $\WP_A^M \in \cc$, then $\IP_A^M \in \cc$.
\end{lemma}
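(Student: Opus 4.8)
The plan is to recover $\IP_A^M$ from $\WP_A^M$ by a right quotient with a regular language, exploiting the fact that $\WP_A^M$ records pairs of words equal in $M$ and that $\varepsilon =_M \varepsilon$. First I would recall that for a language $L \subseteq B^\ast$ and a regular language $R \subseteq B^\ast$, the right quotient $L / R = \{ w \mid \exists r \in R \textnormal{ with } wr \in L\}$ is in $\cc$ whenever $L \in \cc$, by the hypothesis that $\cc$ is closed under quotient with regular languages. The idea is to take $R = \{ \# \}$, which is trivially regular, and observe that $w \# \in \WP_A^M$ holds for $w \in A^\ast$ precisely when $w =_M \varepsilon$, i.e. precisely when $w \in \IP_A^M$.

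Then I would carry out the verification: by definition \eqref{Eq:WP_A^M_def}, a word lies in $\WP_A^M$ iff it has the form $u \# v^\trev$ with $u, v \in A^\ast$ and $u =_M v$. For this word to end in $\#$ with nothing after it, we need $v^\trev \equiv \varepsilon$, equivalently $v \equiv \varepsilon$; and then the condition $u =_M v$ becomes $u =_M \varepsilon$. Hence $\WP_A^M / \{\#\} = \{ u \in A^\ast \mid u =_M \varepsilon \} = \IP_A^M$ (using that words of $\WP_A^M$ contain exactly one occurrence of $\#$, so no word of $\WP_A^M$ of the form $u\#$ with $u \in A^\ast$ can arise in any other way). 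Since $\{\#\}$ is regular and $\WP_A^M \in \cc$, closure under quotient with regular languages gives $\IP_A^M \in \cc$.

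There is essentially no obstacle here; the only mild subtlety is making sure the quotient is taken on the correct side and against the correct singleton, and noting that the single-occurrence-of-$\#$ property of $\WP_A^M$ guarantees the quotient isolates exactly the words with empty second component rather than accidentally capturing words $u\#v^\trev$ with $v^\trev$ ending in something cancellable. Since $\#$ does not belong to $A$, there is no such ambiguity, and the identification $\WP_A^M / \{\#\} = \IP_A^M$ is exact. This completes the proof.
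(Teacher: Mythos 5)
Your proof is correct and takes essentially the same route as the paper, which also obtains $\IP_A^M$ as the right quotient $\WP_A^M / \{\#\varepsilon\} = \WP_A^M/\{\#\}$; your verification that the quotient captures exactly the words $u$ with $u =_M \varepsilon$ is just a spelled-out version of the paper's one-line argument.
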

\begin{proof}
The result immediately follows from 
\[
\IP_A^M  = \{ w \mid w \# \varepsilon \in \WP_A^M \} = \WP_A^M / \{ \# \varepsilon \}.
\]
\end{proof}

The converse of Lemma~\ref{Lem:WP=>IP} only rarely holds; however, when all defining relations of $M$ are of the form $w = 1$ (i.e. when $M$ is \textit{special}), then it does hold \cite{NybergBrodda2020b}. The following is a key lemma.

\begin{lemma}\label{Lem:WP_Monoids_is_bipartisan}
Let $M_1, M_2$ be two monoids generated by finite sets $A_1, A_2$ respectively. Let $M = M_1 \ast M_2$ be their monoid free product, and $A = A_1 \cup A_2$. Let 
\begin{align*}
\cR_1 &= \left\{ (w, 1) \mid w \in \IP_{A_1}^{M_1} \cup \IP_{A_2}^{M_2}\right\} \quad \textnormal{and} \quad  \cR_2 = \left\{ (w^\trev, 1) \mid w \in \IP_{A_1}^{M_1} \cup \IP_{A_2}^{M_2}\right\}.
\end{align*}
Then we have 
\begin{equation}\label{Eq:WP_monoid_equality}
\WP_{A}^{M} = \big( \WP_{A_1}^{M_1} \star \WP_{A_2}^{M_2} \big)^{\cR_1, \cR_2}.
\end{equation}
\end{lemma}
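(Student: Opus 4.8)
The plan is to prove the set equality \eqref{Eq:WP_monoid_equality} by a double inclusion, in both cases passing through the normal form description of the monoid free product given in Lemma~\ref{Lem:Monoid_free_product_normal_form} and the alternating-product description of the semigroup case given in Lemma~\ref{Lem:WP_sfp_is_AP}. The conceptual content is that an element $u\#v^\trev$ of $\WP_A^M$ records $u=_M v$; by Lemma~\ref{Lem:Monoid_free_product_normal_form} this happens iff $u$ and $v$ have reduced forms with the same number of syllables, syllable-wise equal in the factors. The role of $\cR_1$ is to ``re-insert'' on the left-hand side of $\#$ the syllables of $u$ that were deleted to pass to a reduced form of $u$ (each such deleted syllable is a word in some $\IP_{A_{X(i)}}^{M_{X(i)}}$), and the role of $\cR_2$ is to do the same on the right-hand side of $\#$ for $v$; note that the right-hand side of a word in $\WP_A^M$ is $v^\trev$, which is why $\cR_2$ uses reversed left-hand sides $w^\trev$. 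Because taking bipartisan ancestors manipulates the two sides of $\#$ independently (as emphasised after \eqref{Eq:Bipartisan_ancestor_def}), this exactly matches the fact that a reduced form of $u$ and a reduced form of $v$ may be chosen independently.

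For the inclusion $(\supseteq)$, I would take a word $w_1\#w_2$ in the bipartisan ancestor: there is $u'\#v' \in \WP_{A_1}^{M_1}\star\WP_{A_2}^{M_2}$ with $w_1 \in \langle u'\rangle_{\cR_1}$ and $w_2\in\langle v'\rangle_{\cR_2}$. By Lemma~\ref{Lem:WP_sfp_is_AP} (together with the semigroup normal form lemma) the word $u'\#v'$ has the form $u_1'\cdots u_k' \# (v_1'\cdots v_k')^\trev$ with $u_i' \#(v_i')^\trev \in \WP_{A_{X(i)}}^{S_{X(i)}}$, so $u_i' =_{M_{X(i)}} v_i'$ and all $u_i', v_i'$ are nonempty. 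Now $w_1 \xra{\cR_1} u_1'\cdots u_k'$ means $w_1$ is obtained by inserting words from $\bigcup\IP^{M_j}_{A_j}$, i.e.\ $w_1 =_{M} u_1'\cdots u_k'$, hence $w_1 =_M u'$ (using that the inserted factors are trivial in their own factor monoid, hence trivial in $M$). Likewise reading $\cR_2$ through the reversal anti-homomorphism, $w_2 \xra{\cR_2} (v_1'\cdots v_k')^\trev$ gives $w_2 \equiv z^\trev$ with $z =_M v_1'\cdots v_k' =_M v'$. Therefore $w_1 =_M v'{}^\trev{}^{\,\trev}\!\cdots$, more precisely $w_1 =_M u' =_M v' $ and $w_2 \equiv z^\trev$ with $z =_M v'$, so $w_1 =_M z$ and $w_1 \# w_2 \equiv w_1 \# z^\trev \in \WP_A^M$.

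For the inclusion $(\subseteq)$, start with $w_1 \# w_2 \in \WP_A^M$, so $w_2 \equiv w_0^\trev$ with $w_1 =_M w_0$ for some $w_0 \in A^\ast$. Pick reduced forms $u' \equiv u_0\cdots u_m$ of $w_1$ and $v' \equiv v_0\cdots v_n$ of $w_0$; by Lemma~\ref{Lem:Monoid_free_product_normal_form}, $m=n$ and $u_i =_{M_{X(i)}} v_i$ for all $i$, with each $u_i,v_i$ nonempty (possibly the reduced forms are empty, which is the degenerate $\# \in \WP_A^M$ case and handled separately). Then each $u_i \# v_i^\trev \in \WP_{A_{X(i)}}^{S_{X(i)}}$, so $u_0\cdots u_m \# v_m^\trev\cdots v_0^\trev \in \WP_{A_1}^{S_1}\star\WP_{A_2}^{S_2}$ by the alternating-product description. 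It remains to check $w_1 \in \langle u_0\cdots u_m\rangle_{\cR_1}$: passing from $w_1$ to its reduced form $u'$ consists of successively deleting syllables lying in some $\IP_{A_j}^{M_j}$, which is precisely a sequence of $\cR_1$-rewrites, so reversing them shows $w_1 \xra{\cR_1} u'$. Symmetrically, $w_0 \xra{\cR_2'} v'$ where $\cR_2'$ deletes $\IP^{M_j}_{A_j}$-syllables; applying the reversal anti-homomorphism turns this into $w_0^\trev \xra{\cR_2} v'{}^\trev$, i.e.\ $w_2 \in \langle v_m^\trev\cdots v_0^\trev\rangle_{\cR_2}$. Hence $w_1\#w_2$ lies in the bipartisan $(\cR_1,\cR_2)$-ancestor of $\WP_{A_1}^{S_1}\star\WP_{A_2}^{S_2}$.

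The main obstacle I expect is bookkeeping around the reversal: the rewriting systems $\cR_1$ and $\cR_2$ act on opposite sides of $\#$, and on the right-hand side one is manipulating $v^\trev$ rather than $v$, so every statement ``$v$ rewrites under [deletion of trivial syllables]'' must be carefully transported through the anti-homomorphism $^\trev$ to a statement about $v^\trev$ under $\cR_2$ — this is where sign-of-reversal errors creep in. A secondary subtlety is the edge case where a reduced form is empty (the word equals $1$ in $M$), which must be treated so that the alternating product genuinely contains $\#$ and the ancestor relation is vacuous or trivial there; one should remark that since the inserted/deleted syllables are in $\IP_{A_j}^{M_j}$, they are by definition words equal to $1$ in $M_j$ and hence in $M$, which is exactly what makes the $\cR_i$-rewrites $M$-equivariant and keeps everything inside $\WP_A^M$.
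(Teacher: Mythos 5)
Your proposal is correct and follows essentially the same route as the paper's own proof: both inclusions go through Lemma~\ref{Lem:Monoid_free_product_normal_form}, with deletions of trivial syllables when passing to reduced forms recognised as $\cR_1$-rewrites (and, via the reversal anti-homomorphism, as $\cR_2$-rewrites on the right of $\#$), and with the $(\supseteq)$ direction using that insertions of $\IP$-words (resp.\ their reversals) do not change the value in $M$ (resp.\ in $M^\trev$), which is exactly the paper's equivariance argument. The only blemishes are cosmetic: you write $\WP_{A_{X(i)}}^{S_{X(i)}}$ where the factors are the monoids $M_{X(i)}$, and your claim that all syllables of an element of the alternating product are nonempty is not needed (and not true for the monoid word problems), but neither affects the argument.
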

\begin{proof}
For ease of notation, let $\cW_i = \WP_{A_i}^{M_i}$ for $i=1,2$, and let $\cW = \WP_{A}^M$. As both $\cW_1$ and $\cW_2$ are concatenation-closed, their alternating product $\cW_1 \star \cW_2$ is well-defined. Let $L_0 = (\cW_1 \star \cW_2)^{\cR_1, \cR_2}$ denote the right-hand side of \eqref{Eq:WP_monoid_equality}. We prove the claimed equality \eqref{Eq:WP_monoid_equality}, i.e. $\cW = L_0$.

$(\supseteq)$ Suppose $w \in L_0$. Then by \eqref{Eq:Bipartisan_ancestor_def}, we have $w \equiv w_1 \# w_2$ with $w_1, w_2 \in A^\ast$, such that there exist $u_1, u_2 \in \cW_1 \star \cW_2$ such that $w_i \xra{\cR_1} u_i$ for $i=1,2$. As $\cR_1$ is $M_1$-equivariant and $M_2$-equivariant, it is also $M$-equivariant. In particular $w_1 =_M u_1$. As $\cR_2$ is $M_1^\trev$-equivariant and $M_2^\trev$-equivariant, it is $M^\trev$-equivariant. In particular, $w_2^\trev =_{M^\trev} u_2^\trev$. As $u_1 \# u_2 \in \cW_1 \star \cW_2$, it follows easily, just as in the proof of Lemma~\ref{Lem:WP_sfp_is_AP}, that $u_1 \# u_2 \in \cW$. Thus $u_1 =_M u_2^\trev$, so 
\[
w_1 =_M u_1 =_M u_2^\trev =_M w_2^\trev,
\]
and hence $w_1 \# w_2^\trev \in \cW$, as desired.

$(\subseteq)$ Suppose $w \in \cW$. Then $w \equiv u \# v^\trev$, with $u,v \in A^\ast$ and $u =_M v$. Let $u'$ be $\cR_1$-irreducible and such that $u \xra{\cR_1} u'$. Let $v'$ be such that $v'$ is $\cR_1$-irreducible, and such that $v \xra{\cR_1} v'$. Then $v^\trev \xra{\cR_2} (v')^\trev$. Now $u'$ resp. $v'$ is a reduced form of $u$ resp. $v$. Let the corresponding normal forms be
\[
u' \equiv u_0 u_1 \cdots u_m \textnormal{ resp. } v' \equiv v_0 v_1 \cdots v_n.
\]
By Lemma~\ref{Lem:Monoid_free_product_normal_form}, as $u =_M v$ we have $n=m$, $u_i, v_i \in A_{X(i)}$ and $u_i =_{M_{X(i)}} v_i$ for every $0 \leq i \leq n$, where $X$ is a standard parametrisation. Thus, for every $0 \leq i \leq n$, we have $u_i \# v_i^\trev \in \cW_{X(i)}$. Furthermore, as 
\[
u' \# (v')^\trev \equiv u_0 u_1 \cdots u_m \# (v_0 v_1 \cdots v_n)^\trev \equiv u_0 u_1 \cdots u_m \# v_n^\trev \cdots v_2^\trev v_1^\trev,
\]
it follows by the definition \eqref{Eq:Def_alternating} of alternating products that 
\[
u' \# (v')^\trev \in \cW_1 \star \cW_2.
\]
As $u \xra{\cR_1} u'$ and $v^\trev \xra{\cR_2} (v')^\trev$, thus $u \# v^\trev \in (\cW_1 \star \cW_2)^{\cR_1, \cR_2}$. Or, in other words, $w \in L_0$.
\end{proof}

Thus, using the results from \S\ref{Sec:Toolbox}, we now easily deduce:

\begin{theorem}\label{Thm:Monoid_FP_WP}
Let $\cc$ be a super-$\AFL$ closed under reversal. Then the class of monoids with word problem in $\cc$ is closed under taking monoid free products.
\end{theorem}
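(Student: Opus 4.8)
The plan is to combine Lemma~\ref{Lem:WP_Monoids_is_bipartisan} with Proposition~\ref{Prop:L1,L2=>L1starL2_in_C} and Proposition~\ref{Prop:LinC=>L(R1,R2)inC,monadic_prop}, exactly as the word ``easily'' in the run-up to the statement suggests. Fix monoids $M_1, M_2$ with disjoint finite generating sets $A_1, A_2$, word problems $\WP_{A_i}^{M_i} \in \cc$, and let $M = M_1 \ast M_2$, $A = A_1 \cup A_2$. By Lemma~\ref{Lem:WP_Monoids_is_bipartisan},
\[
\WP_A^M = \big( \WP_{A_1}^{M_1} \star \WP_{A_2}^{M_2} \big)^{\cR_1, \cR_2},
\]
with $\cR_1, \cR_2$ the special rewriting systems built from $\IP_{A_1}^{M_1} \cup \IP_{A_2}^{M_2}$ and its reversal. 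So it suffices to show (i) the alternating product $\WP_{A_1}^{M_1} \star \WP_{A_2}^{M_2}$ lies in $\cc$, and (ii) taking the bipartisan $(\cR_1, \cR_2)$-ancestor keeps it in $\cc$.

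For (i), both $\WP_{A_i}^{M_i}$ are concatenation-closed (the Example after the definition of concatenation-closed establishes this for any finitely generated monoid), so Proposition~\ref{Prop:L1,L2=>L1starL2_in_C} applies directly: since $\cc$ is a super-$\AFL$ and both factors are in $\cc$, the alternating product is in $\cc$. One small bookkeeping point: the two word problems are stated over different alphabets $A_1, A_2$, whereas the alternating product was defined with a single ambient alphabet $A$; since $A_1, A_2 \subseteq A$ this is harmless, and $\WP_{A_i}^{M_i} \subseteq A^\ast \# A^\ast$ is still in $\cc$ (an $\AFL$, hence closed under the relevant homomorphisms).

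For (ii), I want to invoke Proposition~\ref{Prop:LinC=>L(R1,R2)inC,monadic_prop}, which needs $\cR_1$ and $\cR_2$ to be \emph{monadic $\cc$-rewriting systems}. Monadicity is immediate: both are special (right-hand sides all $\varepsilon$), hence monadic. The content is that they are $\cc$-rewriting systems, i.e. that the language of left-hand sides with right-hand side $1$ is in $\cc$. For $\cR_1$ this language is $\IP_{A_1}^{M_1} \cup \IP_{A_2}^{M_2}$; by Lemma~\ref{Lem:WP=>IP}, $\WP_{A_i}^{M_i} \in \cc$ gives $\IP_{A_i}^{M_i} \in \cc$ (a super-$\AFL$, being an $\AFL$, is closed under quotient by regular languages), and $\cc$ is closed under union, so $\IP_{A_1}^{M_1} \cup \IP_{A_2}^{M_2} \in \cc$. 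For $\cR_2$ the left-hand side language is $(\IP_{A_1}^{M_1} \cup \IP_{A_2}^{M_2})^\trev$, and this is precisely where the hypothesis that $\cc$ is \emph{closed under reversal} is used: it puts this language in $\cc$ as well. Hence both $\cR_1, \cR_2$ are monadic $\cc$-rewriting systems, Proposition~\ref{Prop:LinC=>L(R1,R2)inC,monadic_prop} applies to the language from step (i), and we conclude $\WP_A^M \in \cc$. Finally, since $\cc$ is an $\AFL$, membership in $\cc$ of the word problem is generating-set independent, so $M$ has word problem in $\cc$; this proves the class is closed under monoid free products.

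There is no serious obstacle here — all the real work was front-loaded into the toolbox of \S\ref{Sec:Toolbox} and into Lemma~\ref{Lem:WP_Monoids_is_bipartisan}. The only point demanding any care is tracking where each closure hypothesis on $\cc$ is consumed: being an $\AFL$ (union, quotient by regular languages, generating-set independence), being a super-$\AFL$ (Propositions~\ref{Prop:L1,L2=>L1starL2_in_C} and~\ref{Prop:LinC=>L(R1,R2)inC,monadic_prop}), and being closed under reversal (to handle $\cR_2$). One might also remark, as a sanity check, that reversal-closure is genuinely needed rather than an artifact: without it $\cR_2$'s left-hand sides need not be in $\cc$, and indeed one cannot expect $\WP_A^M$ to behave well since $M^\trev = M_1^\trev \ast M_2^\trev$ and the word problem is symmetric in the sense recorded in the background section.
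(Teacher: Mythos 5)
Your proposal is correct and follows essentially the same route as the paper's own proof: Lemma~\ref{Lem:WP_Monoids_is_bipartisan} to express $\WP_A^M$ as a bipartisan ancestor of the alternating product, Proposition~\ref{Prop:L1,L2=>L1starL2_in_C} for the alternating product, Lemma~\ref{Lem:WP=>IP} plus closure under union and reversal to see that $\cR_1,\cR_2$ are monadic $\cc$-rewriting systems, and then the bipartisan-ancestor preservation result to conclude $\WP_A^M \in \cc$. Your accounting of exactly where each closure hypothesis (AFL properties, super-$\AFL$, reversal) is consumed matches the paper's argument.
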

\begin{proof}
Let $M_1, M_2$ be two monoids generated by finite disjoint sets $A_1, A_2$. Let $M = M_1 \ast M_2$ denote their free product generated by $A = A_1 \cup A_2$. Let $\cW_i = \WP_{A_i}^{M_i}$ for $i=1,2$, and let $\cW = \WP_{A}^{M}$. Let $\cI_i = \IP_{A_i}^{M_i}$ for $i=1,2$. Assume $M_1$ and $M_2$ have word problem in $\cc$. Then $\cW_1, \cW_2 \in \cc$. By Lemma~\ref{Lem:WP=>IP}, also $\cI_1, \cI_2 \in \cc$. Then $\cR_1, \cR_2$, as in the statement of  Lemma~\ref{Lem:WP_Monoids_is_bipartisan}, are both monadic $\cc$-rewriting systems, as $\cc$ is closed under unions and reversal. In particular, $\cR_1, \cR_2$ are $\cc$-ancestry preserving. By Lemma~\ref{Lem:WP_Monoids_is_bipartisan}, $\cW = (\cW_1 \star \cW_2)^{\cR_1, \cR_2}$. Now, by Proposition~\ref{Prop:L1,L2=>L1starL2_in_C}, $\cW_1 \star \cW_2 \in \cc$. As $\cR_1, \cR_2$ are $\cc$-ancestry preserving, we thus conclude $\cW \in \cc$, as was to be shown. 
\end{proof}

As in the case of semigroup free products, we recover the following, due to Brough, Cain \& Pfeiffer \cite{Brough2019}; however, the authors only provide a sketch proof. 

\begin{corollary}[Brough, Cain \& Pfeiffer, 2019]
The class of monoids with context-free word problem is closed under taking monoid free products.
\end{corollary}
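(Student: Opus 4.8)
The plan is to derive this as an immediate corollary of Theorem~\ref{Thm:Monoid_FP_WP}, exactly parallel to how Corollary~\ref{Cor:SGP_CF_closed_under_FP} is obtained from Theorem~\ref{Theo:SGP_closed_under_FP} in the semigroup case. The only thing to check is that the hypotheses of Theorem~\ref{Thm:Monoid_FP_WP} are met when $\cc = \CF$, the class of context-free languages; that is, that $\CF$ is a super-$\AFL$ closed under reversal.

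First I would recall that $\CF$ is an $\AFL$ (standard) and that it has the monadic ancestor property: this is precisely the content of the Example following Definition~\ref{Def:super-AFL}, citing \cite[Theorem~2.2]{Book1982b} (equivalently \cite[Theorem~1.2]{Kral1970}), so $\CF$ is a super-$\AFL$ by Definition~\ref{Def:super-AFL}. Second, $\CF$ is closed under reversal: given a context-free grammar for a language $L$, reversing the right-hand side of every production yields a grammar for $L^\trev$, so $L \in \CF$ implies $L^\trev \in \CF$. With both hypotheses verified, Theorem~\ref{Thm:Monoid_FP_WP} applies verbatim.

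Concretely: if $M_1, M_2$ are monoids with context-free word problem, then taking $\cc = \CF$ in Theorem~\ref{Thm:Monoid_FP_WP} shows $\WP_{A}^{M_1 \ast M_2} \in \CF$, where $A$ is a finite generating set for $M_1 \ast M_2$ obtained from finite generating sets of $M_1$ and $M_2$. Since $\CF$ is closed under inverse homomorphism, having context-free word problem is independent of the chosen finite generating set, so $M_1 \ast M_2$ has context-free word problem regardless of presentation.

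There is essentially no obstacle here; the work has already been done in Theorem~\ref{Thm:Monoid_FP_WP}, and the only points to be careful about are (i) citing the correct references for $\CF$ being a super-$\AFL$ and (ii) noting closure under reversal, which is elementary. One could add a remark that the original proof in \cite{Brough2019} is only sketched and proceeds via pushdown automata, whereas the present derivation uses no automaton model at all, only the abstract closure properties encapsulated in the notion of a super-$\AFL$ closed under reversal.
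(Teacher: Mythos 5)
Your proposal is correct and matches the paper's own route: the corollary is obtained by specialising Theorem~\ref{Thm:Monoid_FP_WP} to $\cc = \CF$, using that $\CF$ is a super-$\AFL$ (as noted in the example after Definition~\ref{Def:super-AFL}) and is closed under reversal. Your extra remarks on inverse homomorphism and generating-set independence are consistent with the paper's conventions and add nothing that needs correction.
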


As the monoid free product of groups coincides with the (group) free product of groups, we find the following corollary, of independent interest for group theorists.

\begin{corollary}\label{Cor:WP_groups_closed_FP}
Let $\cc$ be a super-$\AFL$ closed under reversal. Then the class of groups with word problem in $\cc$ is closed under taking free products.
\end{corollary}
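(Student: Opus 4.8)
The plan is to deduce Corollary~\ref{Cor:WP_groups_closed_FP} as an immediate specialisation of Theorem~\ref{Thm:Monoid_FP_WP}. The key observation, already recorded in \S\ref{Subsec:Free_products} (see \cite[p.~266]{Howie1995}), is that for groups the monoid free product coincides with the group free product: if $G_1 = \pres{Gp}{A_1}{R_1}$ and $G_2 = \pres{Gp}{A_2}{R_2}$ with $A_1 \cap A_2 = \varnothing$, then the monoid $M_1 \ast M_2$ formed from the underlying monoid presentations $M_i = \pres{Mon}{A_i \cup A_i^{-1}}{R_i'}$ (where $R_i'$ includes the inverse relations $a a^{-1} = 1 = a^{-1} a$) is precisely the group $G_1 \ast G_2$. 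So the statement reduces to checking that the hypothesis ``$G_i$ has word problem in $\cc$'' is the same notion whether we read it group-theoretically or monoid-theoretically, and then invoking the monoid theorem.

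First I would let $G_1, G_2$ be groups with word problem in $\cc$, fix finite group generating sets and pass to the finite monoid generating sets $A_i \cup A_i^{-1}$. Since $\cc$ is an $\AFL$ (in particular closed under inverse homomorphism), by the remarks following \eqref{Eq:WP_A^M_def} — specifically \cite[Theorem~3]{Duncan2004} together with closure under inverse homomorphism — having word problem in $\cc$ is independent of the chosen finite generating set, and for a group the languages \eqref{Eq:IP_group}, \eqref{Eq:IP_Defn}, and \eqref{Eq:WP_A^M_def} all lie in $\cc$ simultaneously. Hence each $G_i$, viewed as a monoid $M_i$ on $A_i \cup A_i^{-1}$, has $\WP_{A_i \cup A_i^{-1}}^{M_i} \in \cc$, i.e. $M_i$ has word problem in $\cc$ in the monoid sense.

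Next I would apply Theorem~\ref{Thm:Monoid_FP_WP}: since $\cc$ is a super-$\AFL$ closed under reversal, the monoid free product $M = M_1 \ast M_2$ has word problem in $\cc$. But $M = G_1 \ast G_2$ as groups, generated as a monoid by the finite set $(A_1 \cup A_1^{-1}) \cup (A_2 \cup A_2^{-1})$. Applying once more the equivalence of \eqref{Eq:IP_group}, \eqref{Eq:IP_Defn}, and \eqref{Eq:WP_A^M_def} for groups (using closure of $\cc$ under inverse homomorphism), we conclude that $G_1 \ast G_2$ has group-theoretic word problem in $\cc$, which is exactly the assertion.

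There is essentially no obstacle here; the only point requiring a word of care is the bookkeeping that the monoid generating set $A_i \cup A_i^{-1}$ is finite and that the monoid presentation obtained from a group presentation genuinely presents the group as a monoid — both standard facts — so that Theorem~\ref{Thm:Monoid_FP_WP} applies verbatim and the passage between the group-theoretic and monoid-theoretic formulations of ``word problem in $\cc$'' is legitimate. One might also remark, as the paper does just afterwards, that taking $\cc = \IND$ yields the new statement that the class of groups with indexed word problem is closed under free products.
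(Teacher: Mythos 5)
Your proof is correct and follows the same route the paper takes: you identify the monoid free product of groups with their group free product, transfer between the group-theoretic and monoid-theoretic word problems via the Duncan--Gilman equivalence (using closure under inverse homomorphism), and then invoke Theorem~\ref{Thm:Monoid_FP_WP}. The paper states this in a single remark, so your write-up is simply a more detailed rendering of the same argument.
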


Of course, for $\cc = \CF$, the class of context-free languages, Corollary~\ref{Cor:WP_groups_closed_FP} is well-known, and was proved already by An\={\i}s\={\i}mov \cite{Anisimov1971}. Indeed,  by the Muller-Schupp theorem, it then says little more than the fact that the free product of two virtually free groups is again virtually free, which is easy to prove directly. On the other hand, Corollary~\ref{Cor:WP_groups_closed_FP} specialises to the following result: 

\begin{corollary}\label{Cor:Indexed_groups_WP}
The class of groups with indexed word problem is closed under taking free products.
\end{corollary}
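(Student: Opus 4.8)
The plan is to deduce Corollary~\ref{Cor:Indexed_groups_WP} as an immediate special case of Corollary~\ref{Cor:WP_groups_closed_FP} by verifying that the class $\IND$ of indexed languages satisfies the two hypotheses required there, namely that it is a super-$\AFL$ and that it is closed under reversal. The first of these is already recorded in the excerpt: $\IND$ appears in the list of examples of super-$\AFL$s, with references to \cite{Aho1968, Engelfriet1985}. So the only genuine point to check is closure of $\IND$ under reversal.

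First I would invoke closure of the indexed languages under reversal. This is a standard fact: given an indexed grammar generating $L$, one obtains an indexed grammar generating $L^\trev$ by reversing the right-hand side of every production (both the terminal strings and the order of the nonterminals, together with their associated index stacks), exactly as one reverses a context-free grammar; alternatively one can argue via nested stack automata, which are trivially symmetric under reading the input backwards. Either way, $\IND$ is closed under $\trev$. Hence $\IND$ is a super-$\AFL$ closed under reversal, and Corollary~\ref{Cor:WP_groups_closed_FP} applies with $\cc = \IND$.

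Second I would simply quote the conclusion of Corollary~\ref{Cor:WP_groups_closed_FP}: if $G_1$ and $G_2$ are groups with $\WP_{A_1}^{G_1}, \WP_{A_2}^{G_2} \in \IND$, then their free product $G_1 \ast G_2$ (which coincides with the monoid free product, as noted before Lemma~\ref{Lem:WP_sfp_is_AP}) has $\WP_{A_1 \cup A_2}^{G_1 \ast G_2} \in \IND$. Since $\IND$ is an $\AFL$, being closed under inverse homomorphism, this is independent of the chosen finite generating sets, so the property ``$G$ has indexed word problem'' is well-defined and is preserved under free products. This finishes the proof.

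I do not expect any real obstacle here: the statement is a corollary in the strict sense, and the entire content has been front-loaded into Corollary~\ref{Cor:WP_groups_closed_FP} and the general theory of \S\ref{Sec:Toolbox}. If anything, the one place warranting a sentence of justification is the closure of $\IND$ under reversal, since the super-$\AFL$ property of $\IND$ is already cited; but this too is classical and needs only a pointer to the literature on indexed grammars or nested stack automata rather than a new argument.
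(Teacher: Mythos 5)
Your proof is correct and follows the same route as the paper, which obtains Corollary~\ref{Cor:Indexed_groups_WP} precisely by specialising Corollary~\ref{Cor:WP_groups_closed_FP} to $\cc = \IND$, relying on the facts that the indexed languages form a super-$\AFL$ and are closed under reversal. Your extra sentence justifying closure of $\IND$ under reversal (via reversing indexed grammar productions or nested stack automata) is a reasonable addition of a standard fact the paper leaves implicit.
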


In general, the problem of determining what the class of groups with indexed word problem is a wide open problem; it is not known to be strictly larger than the class of context-free groups. Indeed, it is not even known whether $\mathbb{Z}^2$ has indexed word problem \cite{Gilman2018}. The above Corollary~\ref{Cor:Indexed_groups_WP} does not appear to have been demonstrated anywhere in the literature on indexed groups. Note that the ``word problem'' in the statement of Corollary~\ref{Cor:Indexed_groups_WP} can either be taken to be $\WP$ or $\IP$, i.e. the set of words equal to $1$, the latter being more common in group theory. 

It seems a (Sisyphean) task to determine precisely the classes of languages $\cc$ such that the class of semigroups (or monoids) with word problem in $\cc$ is closed under free products, outside the pleasant setting of super-$\AFL$s. For example, the class $\REG$ of regular languages certainly does not have either of these two properties, as the free product of finite semigroups/monoids is not generally finite. On the other hand, it is an open problem (see \cite{Brough2019}) whether the class of semigroups (or monoids) with word problem in $\DCF$, the deterministic context-free languages, is closed under free products. We contribute some questions to this general line of inquiry. 

Let $\cc$ be a class of languages closed under inverse homomorphism. We say that $\cc$ is $\operatorname{SFP}$ if the class of semigroups with word problem in $\cc$ is closed under semigroup free products. Analogously, we say that $\cc$ is $\operatorname{MFP}$ if the class of monoids with word problem in $\cc$ is closed under monoid free products. The property $\operatorname{GFP}$, for groups, is defined analogously. Of course, as observed earlier, $\operatorname{MFP} \implies \operatorname{GFP}$. It is natural to ask whether the converse is true, which seems a difficult problem. The relation between $\operatorname{MFP}$ and $\operatorname{SFP}$ also seems rather reticent.

\begin{question}\label{Quest:MFP_versus_SFP}
Is there a class $\cc$ of languages such that $\cc$ has $\operatorname{SFP}$, but not $\operatorname{MFP}$? Or, indeed, such that it has $\operatorname{MFP}$, but not $\operatorname{SFP}$? Such that it has $\operatorname{GFP}$, but not $\operatorname{SFP}$?
\end{question}

We remark that the semigroup free product of two trivial semigroups is infinite, and therefore not regular (with respect to its word problem); whereas the monoid free product of two trivial monoids is trivial, and therefore regular. Thus, there is already a small distinction between the language-theoretic behaviours of the two classes. More non-trivially, Question~\ref{Quest:MFP_versus_SFP} encodes two counteracting tensions: on the one hand, the bipartisan ancestors used for monoid free products seems able to create rather complex behaviour in the word problem for the monoid free product; but on the other hand, the particular form of word problem languages means that any such complex behaviour is rather restricted. To the author, it certainly seems conceivable that there is a positive answer to (some part of) Question~\ref{Quest:MFP_versus_SFP}, by way of some cleverly chosen examples.

\section*{Acknowledgements}

\noindent Most of the research in this article was carried out while the author was a Ph.D. student at the University of East Anglia, and the main results appear in the author's Ph.D. thesis \cite{Thesis}. The author wishes to thank his supervisor Robert D. Gray for much helpful feedback. Finally, the author wishes to thank the Dame Kathleen Ollerenshaw Trust for funding his current research at the University of Manchester.

\bibliography{wp_free_products} 
\bibliographystyle{amsalpha}

\end{document}